\title[BG inequality for hypersurfaces]
{On the Bogomolov-Gieseker inequality for hypersurfaces in the projective spaces}
\date{}
\author{Naoki Koseki}
\theoremstyle{plain}
\newtheorem{thm}{Theorem}[section]
\newtheorem{prop}[thm]{Proposition}
\newtheorem{lem}[thm]{Lemma}
\newtheorem{cor}[thm]{Corollary}
\newtheorem*{thm*}{Theorem}
\theoremstyle{definition}
\newtheorem{defin}[thm]{Definition}
\newtheorem*{NaC}{Notation and Convention}
\newtheorem*{ACK}{Acknowledgement}
\theoremstyle{remark}
\newtheorem{rmk}[thm]{Remark}
\DeclareMathOperator{\ch}{ch}
\DeclareMathOperator{\td}{td}
\newcommand{\bP}{\mathbb{P}}
\newcommand{\bR}{\mathbb{R}}
\newcommand{\bZ}{\mathbb{Z}}
\newcommand{\mcD}{\mathcal{D}}
\newcommand{\mcF}{\mathcal{F}}
\newcommand{\mcH}{\mathcal{H}}
\newcommand{\mcO}{\mathcal{O}}
\newcommand{\mcS}{\mathcal{S}}
\newcommand{\mcT}{\mathcal{T}}
\DeclareMathOperator{\Coh}{Coh}
\DeclareMathOperator{\ext}{ext}
\DeclareMathOperator{\cha}{char}
\begin{document}
\maketitle

\begin{abstract}
We investigate 
the stronger form of the Bogomolov-Gieseker inequality 
on smooth hypersurfaces in the projective space 
of any degree and dimension. 
The main technical tool is the theory of tilt-stability conditions 
in the derived category. 
\end{abstract}

\setcounter{tocdepth}{1}
\tableofcontents

\section{Introduction}
\subsection{Motivation and results}
One of the most important theorems 
in the study of vector bundles on algebraic varieties 
in characteristic zero 
is the following Bogomolov-Gieseker (BG) inequality \cite{bog78, gie79}: 
\begin{equation} \label{eq:bgintro}
\frac{H^{n-2}\ch_2(E)}{H^n\ch_0(E)} \leq 
\frac{1}{2}\left(\frac{H^{n-1}\ch_1(E)}{H^n\ch_0(E)} \right)^2, 
\end{equation}
where $E$ is a slope semistable vector bundle 
on a polarized smooth projective variety $(X, H)$. 
Often it does not give a sharp bound. 
Indeed, it is easy to obtain 
stronger inequalities on del Pezzo surfaces or K3 surfaces, 
simply by using Serre duality and the Riemann-Roch theorem. 
Finding a sharp bound is the same problem 
as the classification of the Chern characters of slope semistable sheaves. 
Such a study goes back to the work of Drezet--Le Potier \cite{dlp85} on the projective plane, 
and is recently developing in relations with Bridgeland stability conditions, 
see e.g., \cite{flz21, lr21}. 

However, only a few results are known 
for general type surfaces or higher dimensional varieties. 
In this paper, we prove several results in this direction: 

\begin{thm}[Theorem \ref{thm:bgP3}, Corollary \ref{cor:bgP3}, Theorem \ref{thm:P3positive}] 
\label{thm:P3intro}
Let $k=\overline{k}$ be an algebraically closed field of arbitrary characteristic. 
Let $E$ be a slope semistable torsion free sheaf on $\bP^3_k$ 
with slope $\mu(E) \in [0, 1]$. 
Then the following inequality holds: 
\[
\frac{\ch_2(E)}{\ch_0(E)} \leq \Theta\left( \mu(E) \right), 
\]
where the function 
$\Theta \colon [0, 1] \to \bR$ 
is defined as follows: 
\[
\Theta(t):=
\begin{cases}
-t/4 & (t \in [0, 1/2]) \\
5t/4-3/4 & (t \in (1/2, 1]). 
\end{cases}
\]
\end{thm}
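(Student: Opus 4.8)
The plan is to establish the bound on the half-interval $t\in[0,1/2]$ by tilt-stability and to transport it to $t\in(1/2,1]$ by duality. First I would make three harmless reductions. Since the Jordan--Hölder factors of a slope semistable sheaf share its slope $\mu$, and $\ch_2/\ch_0$ is the $\ch_0$-weighted average of the corresponding quantities of the factors, it suffices to treat $\mu$-stable $E$. Replacing $E$ by $E^{\vee\vee}$ only increases $\ch_2$ (the cokernel is supported in codimension $\ge 2$) while preserving $\ch_0,\ch_1$ and stability, so I may assume $E$ reflexive, hence locally free away from finitely many points. For reflexive $E$ one has $\ch_i(E^\vee)=(-1)^i\ch_i(E)$ for $i\le 2$, whence
\[
\frac{\ch_2\bigl(E^\vee(1)\bigr)}{\ch_0(E)}=\frac{\ch_2(E)}{\ch_0(E)}-\mu(E)+\frac12 .
\]
As $E\mapsto E^\vee(1)$ is an involution on slope stable reflexive sheaves sending $\mu$ to $1-\mu$, applying the half-interval bound to $E^\vee(1)$, whose slope is $1-\mu(E)\in[0,1/2]$, and using this identity yields $\ch_2(E)/\ch_0(E)\le 5\mu(E)/4-3/4$; this is Corollary~\ref{cor:bgP3}. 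Everything thus reduces to proving $\ch_2(E)/\ch_0(E)\le -\mu(E)/4$ for $\mu(E)\in[0,1/2]$, the content of Theorem~\ref{thm:bgP3}.

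For the core estimate I would use tilt-stability on $\bP^3$, which is available over an arbitrary algebraically closed field. Recall that for $\alpha>0$ and $\beta\in\bR$ one tilts $\Coh(\bP^3)$ to a heart $\Coh^\beta(\bP^3)$ and defines a tilt-slope $\nu_{\alpha,\beta}$; a $\mu$-stable sheaf $E$ with $\mu(E)>\beta$ lies in $\Coh^\beta(\bP^3)$ and is $\nu_{\alpha,\beta}$-stable for $\alpha\gg0$. Fixing $\beta\in(-1,0)$ I would decrease $\alpha$ and record the wall-and-chamber structure: the numerical walls for $E$ are nested semicircles centred on the $\beta$-axis, and along any wall a destabilizing sequence $0\to F\to E\to G\to0$ in $\Coh^\beta(\bP^3)$ has $F,G$ again $\nu_{\alpha,\beta}$-semistable, so the tilt Bogomolov--Gieseker inequality $\overline{\Delta}:=(\ch_1)^2-2\ch_0\ch_2\ge0$ holds for each of $E,F,G$, together with the additivity relation $\overline{\Delta}(F)+\overline{\Delta}(G)\le\overline{\Delta}(E)$.

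To convert this into the sharp linear bound I would combine the wall structure with cohomology vanishing and Riemann--Roch. Concretely, $\nu_{\alpha,\beta}$-stability of $E$ and of the line bundles $\mcO,\mcO(-1)$ forces $\Hom(\mcO(k),E)$ and $\Hom(E,\mcO(k)[i])$ to vanish in a suitable range of $k$ and $i$; feeding these vanishings into the difference $\chi(E)-\chi(E(-1))$, which is free of $\ch_3$ and equals the Euler characteristic of the hyperplane restriction, namely $\ch_2+\tfrac32\ch_1+\ch_0$, bounds it from above by the dimension of a single $\Hom$-space. Optimizing the test line bundle $\mcO(k)$ and the location of $(\alpha,\beta)$ on the boundary of the stability chamber produces the coefficient $-1/4$ and yields $\ch_2\le-\ch_1/4$. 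The borderline slope $\mu=0$ is handled directly by the classical inequality \eqref{eq:bgintro}, which already gives $\ch_2\le0=\Theta(0)$ and matches the tilt computation in the limit $\beta\to0^-$.

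The main obstacle is the classification of the walls: one must show that no semicircular wall lies below the one producing the slope $-1/4$, i.e. that every potential destabilizer $F$ compatible with $\overline{\Delta}(F)\ge0$, the additivity relation, and the integrality of $\ch_0(F),\ch_1(F),\ch_2(F)$ either fails to destabilize $E$ or already forces $\ch_2(E)/\ch_0(E)\le-\mu(E)/4$. This is a finite but delicate check, since high-rank destabilizers—whose charges accumulate along the extremal line $t\mapsto -t/4$—must be excluded family by family using the Bogomolov inequality for their tilt-stable factors. The remaining input, Theorem~\ref{thm:P3positive}, records the complementary fact that the extremal value is genuinely approached, so that $\Theta$ cannot be lowered; combined with the reductions above, this assembles into the stated inequality on all of $[0,1]$.
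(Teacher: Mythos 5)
Your reductions are sound: passing to stable reflexive sheaves and transporting the bound from $[0,1/2]$ to $(1/2,1]$ via $E \mapsto E^\vee(1)$ is correct (the identity $\ch_2\left(E^\vee(1)\right)/\ch_0 = \ch_2(E)/\ch_0 - \mu(E) + 1/2$ does convert $-(1-\mu)/4$ into $5\mu/4 - 3/4$), and this matches how the paper itself handles the interval $(1/2,1]$ inside its surface lemmas. The genuine gap is the core estimate on $[0,1/2]$, which you never actually prove. Your plan --- Hom-vanishing against line bundles $\mcO(k)$, the $\ch_3$-free Euler characteristic $\chi(E)-\chi(E(-1))$, and a classification of walls --- is essentially the method of Li \cite{li19a}, and the paper states explicitly that the inequality obtainable that way is strictly weaker than Theorem \ref{thm:P3intro}. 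The claim that ``optimizing the test line bundle and the location of $(\alpha,\beta)$ produces the coefficient $-1/4$'' is asserted without any computation, and the step you yourself flag as the main obstacle --- excluding, family by family, the high-rank potential destabilizers accumulating along the extremal line --- is exactly the part that is left undone. As written, the proposal contains no mechanism that could yield the sharp constant.

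What the paper actually uses, and your sketch never touches, is the Feyzbakhsh--Li restriction lemma (Lemma \ref{lem:restriction}): take a counterexample of minimal discriminant $\overline{\Delta}_H$, show it remains tilt-stable along the lines $\beta = 0, 2, 4$, and restrict it to general hypersurfaces of degree $2$ and $4$. On the quadric $S_2 \cong \bP^1\times\bP^1$, Hom-vanishing against the two rulings plus Riemann--Roch gives $\ch_2/(H^2\ch_0) \leq -\mu/2$ for $\mu \in [0,1/2)$ (Lemma \ref{lem:bgS2}); this bound unavoidably degenerates to $0$ at $\mu = 1/2$ because of the line bundles $\mcO_{S_2}(h_i)$, and it is the quartic K3 surface that rescues that slope: a stable bundle with $\mu = 1/2$ on $S_4$ is non-spherical by an integrality argument, so $\chi(E,E) \leq 0$ yields $\ch_2/(H^2\ch_0) \leq -1/8$ (Lemma \ref{lem:bgS4}). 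The function $\Theta(t) = -t/4$ is precisely the chord joining $(0,0)$ to $(1/2,-1/8)$, i.e., the star-shaped envelope of these two geometric inputs; no wall bookkeeping on $\bP^3$ alone is known to replace them. Finally, you misread Theorem \ref{thm:P3positive}: it is not a sharpness statement but the positive-characteristic case, which the paper deduces from Langer's theorem ($\mu_{H,\max}(\Omega_{\bP^3}) \leq 0$ forces the classical BG inequality in characteristic $p$, so the tilt-stability apparatus on $\bP^3$ is available there); your proposal asserts this availability over any field but never justifies it.
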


\begin{thm}[Theorems \ref{thm:bggeneral}, Theorem \ref{thm:hyperpositive}] 
\label{thm:hyperintro}
Let $k=\overline{k}$ be an algebraically closed field 
of characteristic zero. 
Let $S^n_d \subset \bP^{n+1}_k$ be a smooth hypersurface 
of degree $d \geq 1$, dimension $n \geq 2$, 
$H$ the restriction of the hyperplane class on $\bP^{n+1}_k$ to $S^n_d$. 
Let $E$ be a slope semistable sheaf 
with slope $\mu_H(E) \in [0, 1]$. 
Then we have the inequality 
\[
\frac{H^{n-2}\ch_2(E)}{H^n\ch_0(E)} \leq 
\Xi(\mu_H(E)), 
\]
where we define the function $\Xi \colon [0, 1] \to \bR$ as 
\[
\Xi(t):=
\begin{cases}
\frac{1}{3}t^2-\frac{1}{12}t & (t \in [0, 1/2]) \\
\frac{1}{3}t^2+\frac{5}{12}t-\frac{1}{4 } & (t \in [1/2, 1]). 
\end{cases}
\]

Moreover, when $n=2$, 
the result also holds in positive characteristic. 
\end{thm}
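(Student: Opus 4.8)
The plan is to prove the theorem in two stages: reduce the general case $n \geq 2$ to the surface case $n=2$ by cutting with hyperplanes, and then treat the surface case by transporting the problem into the derived category of the ambient $\bP^3$, where a strong Bogomolov--Gieseker inequality for tilt-semistable objects is available. This also explains the shape of the characteristic hypothesis: only the reduction step will use $\cha(k)=0$.

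To reduce to $n=2$, I would set $T = S^n_d \cap H_1 \cap \cdots \cap H_{n-2}$ for $n-2$ general hyperplanes. By Bertini, $T$ is a smooth complete intersection of type $(d,1,\dots,1)$ in $\bP^{n+1}_k$, hence isomorphic as a polarised surface to a degree-$d$ hypersurface $S^2_d \subset \bP^3$ with hyperplane class $H_T = H|_T$. A restriction theorem (Mehta--Ramanathan, or Flenner in the form valid for a single hyperplane section, which is where $\cha(k)=0$ enters) shows that for general $T$ the restriction $E|_T$ is again slope-semistable, and a direct intersection-number computation gives
\[
H^{n-2}\ch_2(E)=\int_T \ch_2(E|_T),\quad H^n\ch_0(E)=H_T^2\,\ch_0(E|_T),\quad \mu_H(E)=\mu_{H_T}(E|_T).
\]
Thus both the quantity to be bounded and the slope are preserved, and the inequality for $S^n_d$ follows from that for $T\cong S^2_d$. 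Since this is the only step requiring $\cha(k)=0$, the surface case will hold over an arbitrary algebraically closed field.

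For the surface case, put $i \colon S=S^2_d \hookrightarrow \bP^3$ and replace $E$ by its pushforward $i_{*}E$. Using Grothendieck--Riemann--Roch with the normal bundle $N_{S/\bP^3}=\mcO_S(dH)$, I would compute the Chern character of $i_{*}E$ on $\bP^3$: it is a rank-zero object whose $\ch_1,\ch_2,\ch_3$ are explicit polynomials in $d$, $\ch_0(E)$, $\mu_H(E)$ and $\ch_2(E)$. The structural input is that slope-semistability of $E$ on $S$ forces $i_{*}E$ to be tilt-semistable on $\bP^3$ for the relevant parameters; granting this, I apply the Bogomolov--Gieseker inequality for tilt-semistable objects on $\bP^3$ (the inequality bounding $\ch_3^\beta$ in terms of $\ch_1^\beta$ and the tilt parameter $(\alpha,\beta)$), which is known on $\bP^3$ in every characteristic. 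Substituting the Chern character of $i_{*}E$ on the numerical wall $\nu_{\alpha,\beta}(i_{*}E)=0$ and optimising over $\beta$, the terms in $d^3$ and $d^2$ cancel, leaving a bound on $\ch_2(E)/(d\,\ch_0(E))$ that depends only on $\mu_H(E)$. The two branches of $\Xi$, meeting at $\mu_H=1/2$, should correspond to the two ranges of the optimal parameter $\beta$.

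I expect the main obstacle to be establishing tilt-semistability of $i_{*}E$ on $\bP^3$, since pushforward along $i$ does not obviously preserve stability: one must rule out destabilising subobjects in the tilted heart, which amounts to controlling morphisms from lower-dimensional tilt-stable objects into $i_{*}E$. A secondary, purely computational difficulty is to carry out the $\beta$-optimisation so that it lands exactly on the stated piecewise-quadratic $\Xi$, and to check that the endpoints $\mu_H\in\{0,1\}$ and the junction $\mu_H=1/2$ are treated consistently. Finally, for the reduction I would need the restriction theorem for a single hyperplane section rather than a high-degree one, so that the polarisation $H$, and hence the normalisation of every Chern number above, is preserved; this is precisely where the hypothesis $\cha(k)=0$ is required for $n\geq 3$.
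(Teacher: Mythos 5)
Your high-level architecture (cut down to the surface $S^2_d$, push forward to $\bP^3$, apply the generalized $\ch_3$-inequality) matches the paper, but both stages have genuine gaps. The reduction step fails as you describe it: Mehta--Ramanathan requires restriction to general members of $|mH|$ with $m \gg 0$, and Flenner's theorem carries numerical hypotheses that exclude $m=1$; neither gives semistability of the restriction to a \emph{single} general hyperplane section, and indeed this is false in general. For example $T_{\bP^3}(-1)$ is stable of slope $1/3$, but its restriction to any plane contains $T_{\bP^2}(-1)$ of slope $1/2$ and is therefore unstable. Since you must cut by degree-one hyperplanes to land on $S^2_d$ with the same polarization, no classical restriction theorem can repair this. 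The paper circumvents the problem with the tilt-stability restriction lemma (Lemma \ref{lem:restriction}, after Feyzbakhsh and Li): one never restricts an arbitrary semistable sheaf, but argues by contradiction and restricts only a \emph{minimal-discriminant violator} of the inequality, which can be shown to have no walls along $\beta=0,1$; the star-shapedness of $\Xi$ is exactly what guarantees that Jordan--H{\"o}lder factors at a putative wall would again violate the inequality, contradicting minimality. Note also that characteristic zero enters not through Flenner but because the tilt-stability machinery on $S^n_d$, $n\geq 3$, requires the classical Bogomolov--Gieseker inequality on $S^n_d$, which is unknown in positive characteristic (this is precisely why the paper's Theorem \ref{thm:hyperpositive} is restricted to $n=2$).

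In the surface case, the step you flag as ``the main obstacle'' and then grant --- tilt-semistability of $\iota_*E$ at a sufficiently small parameter --- is where all the real content lies, and your proposal offers no mechanism for it. It is standard that $\iota_*E$ is $\nu_{0,\alpha}$-semistable for $\alpha \gg 0$, but the generalized BG inequality at $(0,\alpha)$ becomes vacuous as $\alpha \to \infty$ (the term $2\alpha\overline{\Delta}_H(\iota_*E)$ dominates and $\overline{\Delta}_H(\iota_*E)\geq 0$), so one gets nothing without a quantitative bound on the first wall; there is also no ``optimisation over $\beta$'' in the paper --- the inequality is applied at the single point $(0,\alpha_\mu)$. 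The missing idea is that the wall bound is obtained from Theorem \ref{thm:P3intro}, the strong BG inequality on $\bP^3$ itself (proved separately by restriction to quadric and quartic surfaces): by Corollary \ref{cor:bgP3} the endpoints of any wall for $\iota_*E$ lie on the graph of $\Theta$, and combining this with the wall-width bound $\beta_2-\beta_1\leq d$ of Lemma \ref{lem:firstwall} forces every wall below an explicit line, giving $\alpha_\mu=-\mu^2+\frac{2d-1}{4}\mu$ (Lemma \ref{lem:wall}). Substituting this $\alpha_\mu$ into the generalized BG inequality is what produces exactly $\Xi$; the two branches of $\Xi$ come from the two branches of $\Theta$ together with the duality $E\mapsto E^\vee(H)$ handling slopes in $[1/2,1]$, not from two ranges of an optimal $\beta$. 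Without Theorem \ref{thm:P3intro} as input, your scheme cannot reach the stated bound.
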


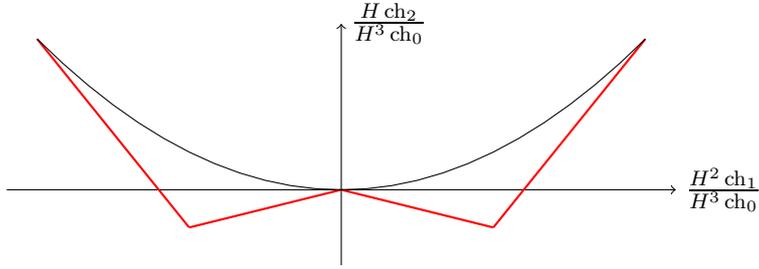
\begin{figure}[htb]
\begin{center}
\begin{tikzpicture}[scale=4]
\draw[->] (-1.1, 0) -- (1.1, 0) node[right]{$\frac{H^2\ch_1}{H^3\ch_0}$}; 
\draw[->] (0, -1/4) -- (0, 0.55) node[right]{$\frac{H\ch_2}{H^3\ch_0}$}; 
\draw[red, thick, domain=-1:-1/2] plot(\x, -5/4*\x-3/4);
\draw[red, thick, domain=-1/2:-0] plot(\x, 1/4*\x);
\draw[red, thick, domain=0:1/2] plot(\x, -1/4*\x); 
\draw[red, thick, domain=1/2:1] plot(\x,  5/4*\x-3/4); 
\draw[domain=-1:1] plot(\x, 1/2*\x*\x); 
\end{tikzpicture}
\end{center}
\caption{strong BG inequality on $\bP^3$.}
\end{figure}

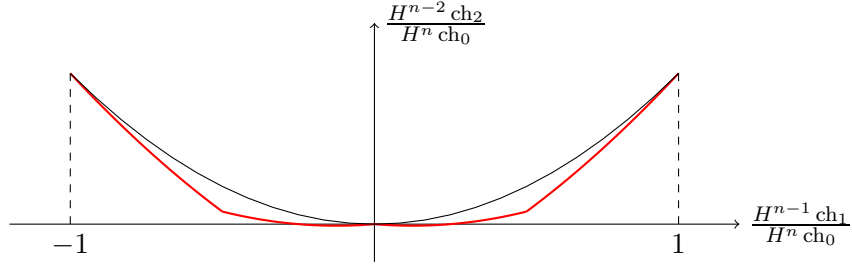
\begin{figure}[htb]
\begin{center}
\begin{tikzpicture}[scale=4]
\draw[->] (-1.2, 0) -- (1.2, 0) node[right]{$\frac{H^{n-1}\ch_1}{H^n\ch_0}$}; 
\draw[->] (0, -1/8) -- (0, 2/3) node[right]{$\frac{H^{n-2}\ch_2}{H^n\ch_0}$}; 
\draw[red, thick, domain=-1:-1/2] plot(\x, 1/3*\x*\x-5/12*\x-1/4);
\draw[red, thick, domain=-1/2:-0] plot(\x, 1/3*\x*\x+1/12*\x);
\draw[red, thick, domain=0:1/2] plot(\x, 1/3*\x*\x-1/12*\x); 
\draw[red, thick, domain=1/2:1] plot(\x,  1/3*\x*\x+5/12*\x-1/4); 
\draw[domain=-1:1] plot(\x, 1/2*\x*\x); 
\draw[dashed] (1, 0)  node[below]{$1$} -- (1, 1/2); 
\draw[dashed] (-1, 0)  node[below]{$-1$} -- (-1, 1/2); 
\end{tikzpicture}
\end{center}
\caption{strong BG inequality on hypersurfaces.}
\end{figure}

\subsection{Idea of proof} 
The key ingredient of the proofs is 
the theory of {\it tilt-stability} 
in the derived category 
(see Section \ref{sec:pre} 
for the definition and basic properties). 
In particular, we use 
(1) {\it the restriction technique for tilt-stability}, 
and (2) {\it the generalized BG type inequality} on $\bP^3$. 

(1) The restriction result 
for tilt-stability (Lemma \ref{lem:restriction}), 
first found by Feyzbakhsh \cite{fey16} and Li \cite{li19b}, 
enables us to reduce the problems to the surface case. 
In contrast to the usual effective restriction theorem 
for slope semistability (see e.g. \cite{lan10}), 
we are often allowed to cut by a hyperplane of degree one, 
not its higher multiples. 
By this observation, 
we are able to deduce Theorem \ref{thm:hyperintro} for $S^n_d$ 
from the case of the surface $S^2_d$ of the same degree. 
Similarly, we obtain the result on $\bP^3$ 
by restricting to the low dgree hypersurfaces, 
but to obtain the strong result as in Theorem \ref{thm:P3intro}, 
we use the quadric and quartic surfaces, 
not only $\bP^2$. 

(2) The generalized BG type inequality is 
the inequality for the Chern characters 
of tilt-semistable objects on $D^b(\bP^3)$, 
involving the third part of the Chern character, 
which depends on the parameter $(\beta, \alpha) \in \bR^2$ 
of tilt-stability conditons 
(see Theorem \ref{thm:bg3P3}). 
Let us call this inequality as $BG(\beta, \alpha)$. 

For a slope semistable sheaf $E \in \Coh(S^2_d)$, 
it is known that the torsion sheaf $\iota_*E \in \Coh(\bP^3)$ 
is tilt-semistable when the parameter $\alpha$ is sufficiently large. 
The next important step is to analyze 
the tilt-stability of $\iota_*E$ 
when we decrease the parameter $\alpha$, 
since  if it remains tilt-semistable for the smaller $\alpha$, 
we get the better inequality $BG(\beta, \alpha)$. 
To get the better bound on $\alpha$, 
we use Theorem \ref{thm:P3intro} in a crucial way.

\subsection{Complete intersections}
One may wish to generalize Theorem \ref{thm:hyperintro}
to all complete intersections in the projective space. 
For this, we need the conjectural $\ch_3$-inequality $BG(\beta, \alpha)$
on threefolds $S^3_d \subset \bP^4$ for $d \geq 2$. 
At this moment, the conjecture is solved only when $d \leq 5$ 
(cf. \cite{li19b, li19a}).

\subsection{Relation with the existing works}
There are several works investigating strong forms of the BG inequality. 
In \cite{har78}, Hartshorne obtains a sharp bound 
for possible Chern characters of semistable vector bundle 
of rank two on $\bP^3$. 
In \cite{li19a}, Li proves the stronger BG inequality 
for all Fano threefolds of Picard rank one, in particular for $\bP^3$. 
Our Theorem \ref{thm:P3intro} is stronger than that. 

In \cite{ss18}, Schmidt-Sung obtain a sharp bound 
for rank two vector bundles on hypersurfaces in $\bP^3$ 
with a similar method as in this paper. 
Theorem \ref{thm:hyperintro} for higher rank bundles is completely new. 
See also \cite{ms11, ms18, tod14c} 
for other results concerning rank two vector bundles. 

In \cite{kos20, li19b}, stronger BG inequalities on 
certain classes of Calabi-Yau threefolds are studied, 
with a different argument using the restriction to complete intersection curves. 
The advantage of our approach is the fact that 
we can uniformly treat hypersurfaces of all degrees 
at the same time, with short and simple computations. 

See also \cite{bay18, bl17, fey19, fey20, fl18, ft19, ft20} 
for other interesting applications of the wall-crossing in tilt-stability.

\subsection{Plan of the paper}
The paper is organized as follows. 
Until Section \ref{sec:hyper}, we work over 
an algebraically closed field of characteristic zero. 
In Section \ref{sec:pre}, 
we recall basic notions in the theory of tilt-stability. 
In Section \ref{sec:wc}, we summarize results 
obtained via wall-crossing arguments in tilt-stability. 
In Section \ref{sec:P3}, 
we prove Theorem \ref{thm:P3intro}. 
In Section \ref{sec:hyper}, we prove Theorem \ref{thm:hyperintro}. 
Finally in Section \ref{sec:positive}, 
we discuss the case of positive characteristic. 

\begin{ACK}
The author would like to thank Professor Arend Bayer
for useful discussions and comments. 
The author was supported by 
ERC Consolidator grant WallCrossAG, no.~819864. 
\end{ACK}

\begin{NaC}
Until Section \ref{sec:hyper}, we work over 
an algebraically closed field of characteristic zero, 
while in Section \ref{sec:positive} we work in positive characteristic. 
We use the following notations: 
\begin{itemize}
\item $\Coh(X)$: the category of coherent sheaves on a variety $X$. 
\item $D^b(X):=D^b(\Coh(X))$: the bounded derived category of coherent sheaves. 
\item $\ch^\beta=(\ch^\beta_0, \ch^\beta_1, \cdots, \ch^\beta_n):=e^{-\beta H}.\ch$: 
the $\beta$-twisted Chern character for a real number $\beta \in \bR$ and an ample divisor $H$. 
\end{itemize}
\end{NaC}

\section{Prelimilaries} \label{sec:pre}
Until the end of Section \ref{sec:hyper}, 
we work over an algebraically closed field of characteristic zero. 
In this section, we quickly recall 
the notion of tilt-stability on the derived categories 
and its properties. 
See \cite{bms16, bmt14a, li19b, ms17} for the details. 
Let $X$ be a smooth projective variety of dimension $n \geq 2$, 
and $H$ an ample divisor on $X$. 
Let us fix real numbers $\beta, \alpha \in \bR$ with 
$\alpha > \beta^2/2$.

\subsection{Definition of tilt-stability}
Let us first define a slope function on the category $\Coh(X)$ as 
\[
\mu_H:=\frac{H^{n-1}\ch_1}{H^n\ch_0} \colon \Coh(X) \to \bR \cup \{+\infty\}. 
\]
We define the notion of {\it $\mu_H$-stability} 
(or {\it slope stability}) for coherent sheaves 
in the usual way. 
We are then able to construct 
a new heart in the derived category $D^b(X)$ 
using the notion of {\it torsion pair} and {\it tilting} 
(cf \cite{hrs96}). 
Let us define full subcategories 
$\mcT_\beta, \mcF_\beta \subset \Coh(X)$ as follows: 
\begin{align*}
&\mcT_\beta:=\left\langle 
	T \in \Coh(X) \colon T \mbox{ is } \mu_H \mbox{-semistable with } \mu_H(T) >\beta 
	\right\rangle, \\
&\mcF_\beta:=\left\langle 
	F \in \Coh(X) \colon F \mbox{ is } \mu_H \mbox{-semistable with } \mu_H(F) \leq \beta 
	\right\rangle. 
\end{align*}
Here, for a set of objects $S \subset \Coh(X)$, 
we denote by $\langle S \rangle \subset \Coh(X)$ 
the extension closure of $S$. 
By the existence of Harder-Narasimhan filtrations 
with respect to slope stability, 
the pair $(\mcT_\beta, \mcF_\beta)$ is a torsion pair 
on $\Coh(X)$. 
Hence the category 
\[
\Coh^\beta(X):=\left\langle 
	\mcF_\beta[1], \mcT_\beta
	\right\rangle 
\subset D^b(X), 
\]
defined as the extension closure of 
$\mcF_\beta[1] \cup \mcT_\beta$ in $D^b(X)$, 
is the heart of a bounded t-structure on $D^b(X)$. 
We now define a new slope function on $\Coh^\beta(X)$ as 
\[
\nu_{\beta, \alpha}:=\frac{H^{n-2}\ch_2-\alpha H^n\ch_0}{H^{n-1}\ch_1-\beta H^n\ch_0} 
\colon \Coh^\beta(X) \to \bR \cup \{+\infty\}, 
\]
and the notion of {\it $\nu_{\beta, \alpha}$-stability} (or {\it tilt-stability}) 
for objects in $\Coh^\beta(X)$ as similar to $\mu_H$-stability. 
See for example \cite[Section 2]{li19b} 
for basic properties of tilt-stability.

\subsection{Bogomolov-Gieseker type inequalities}
Here we recall several variants of 
Bogomolov-Gieseker (BG) type inequalities. 
For an object $E \in D^b(X)$, we define 
\begin{align*}
&\Delta(E):=\left(\ch_1(E) \right)^2-2\ch_0(E)\ch_2(E), \\
&\overline{\Delta}_H(E):=\left(H^{n-1}\ch_1(E) \right)^2-2H^n\ch_0(E)H^{n-2}\ch_2(E). 
\end{align*}
The following is the classical BG inequality: 
\begin{thm}[\cite{bog78, gie79, lan04}] \label{thm:bgusual}
Every $\mu_H$-semistable torsion free sheave $E$ satisfies the inequality 
\[
\overline{\Delta}_H(E) \geq H^{n-2}\Delta(E) \geq 0. 
\]
\end{thm}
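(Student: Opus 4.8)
The right-hand inequality $H^{n-2}\Delta(E) \geq 0$ is the genuine Bogomolov--Gieseker inequality and carries all the content, whereas the left-hand inequality is a formal consequence of the Hodge index theorem; I would treat the two separately. For the left-hand inequality, the plan is to apply the Hodge index theorem to the divisor class $\ch_1(E) \in \NS(X)_{\bR}$, which yields $(H^{n-1}\ch_1(E))^2 \geq (H^n)(H^{n-2}\ch_1(E)^2)$. A direct expansion produces the identity
\[
\overline{\Delta}_H(E) - (H^n)\cdot H^{n-2}\Delta(E) = (H^{n-1}\ch_1(E))^2 - (H^n)(H^{n-2}\ch_1(E)^2),
\]
the $\ch_2$-terms cancelling, so the left side is $\geq 0$ by Hodge index. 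Since $H$ is ample, $H^n \geq 1$, and hence once $H^{n-2}\Delta(E) \geq 0$ is established one obtains the whole chain $\overline{\Delta}_H(E) \geq (H^n)\cdot H^{n-2}\Delta(E) \geq H^{n-2}\Delta(E) \geq 0$. Note that this step uses no semistability hypothesis.

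It then remains to prove $H^{n-2}\Delta(E) \geq 0$, and here I would reduce to surfaces. Because $\Delta(E)$ is a numerical class of codimension two, for $m \gg 0$ and a general complete intersection surface $S = D_1 \cap \cdots \cap D_{n-2}$ with $D_i \in |mH|$, the Mehta--Ramanathan restriction theorem guarantees that $E|_S$ is again $\mu_{H|_S}$-semistable; generality of $S$ (Bertini) keeps $E|_S$ torsion free. Since Chern classes are compatible with restriction, $(mH)^{n-2}\Delta(E) = \Delta(E|_S)$, and dividing by $m^{n-2} > 0$ reduces the claim to the surface inequality $\Delta(E|_S) \geq 0$.

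The surface case is the heart of the argument. In characteristic zero, tensor products of $\mu$-semistable sheaves are again $\mu$-semistable; applying this to $\End(E|_S) = E|_S \otimes (E|_S)^\vee$, which is semistable of slope $0$ with $\ch_2(\End(E|_S)) = -\Delta(E|_S)$, one controls $h^0$ and, via Serre duality, $h^2$ of the twists $\End(E|_S) \otimes \mathcal{O}(kH)$. Asymptotic Riemann--Roch then shows that $\chi(\End(E|_S)(kH))$ has a constant term governed by $-\Delta(E|_S)$; were $\Delta(E|_S) < 0$, the inequality $h^1 \geq 0$ together with the vanishing of $h^0$ and $h^2$ in the relevant range of $k$ would give a contradiction. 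This is the classical argument of Bogomolov and Gieseker, so in practice I would cite \cite{bog78, gie79}; in positive characteristic the same inequality is Langer's theorem \cite{lan04}, where the characteristic-zero tensor-product input is replaced by his boundedness estimates.

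The main obstacle is precisely this surface inequality, and within it the characteristic-zero fact that semistability is preserved under tensor products (equivalently, a restriction theorem with quantitative control). By contrast the Hodge index comparison and the Mehta--Ramanathan reduction are formal and short.
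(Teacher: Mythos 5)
Your proposal cannot be compared against an internal argument, because the paper gives none: Theorem \ref{thm:bgusual} is quoted from the literature with exactly the references \cite{bog78, gie79, lan04} that you yourself invoke at the core step, so what you have done is reconstruct the standard proof behind the citation. Your reduction steps are correct. The identity
\[
\overline{\Delta}_H(E) - (H^n)\cdot H^{n-2}\Delta(E)
= \left(H^{n-1}\ch_1(E)\right)^2 - (H^n)\left(H^{n-2}\ch_1(E)^2\right)
\]
holds (the $\ch_2$-terms do cancel), its right-hand side is nonnegative by the Hodge index theorem, and since $H^n \geq 1$ the full chain follows once $H^{n-2}\Delta(E) \geq 0$ is known; this part indeed uses no semistability. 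The Mehta--Ramanathan reduction to a surface is also the standard route; the Chern-class compatibility is cleanest if run one hypersurface at a time (for a torsion-free sheaf and a general $D \in |mH|$ the sequence $0 \to E(-mH) \to E \to E|_D \to 0$ is exact, $E|_D$ is again torsion free, and there are no Tor corrections, so $\ch(E|_D) = \ch(E)|_D$; then iterate), rather than restricting to the surface in one step, where a priori the complete intersection meets the non-locally-free locus of $E$.

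Two inaccuracies should be fixed, though neither breaks the proof of the statement as the paper intends it (the theorem sits in the characteristic-zero part of the paper). First, your sketch of the surface case does not work as described: twisting $\End(E|_S)$ by $\mcO(kH)$ changes $\chi$ by $\tfrac{1}{2}r^2\,kH\cdot(kH-K_S)$, a rank-dominated quadratic in $k$ next to which the constant term $-\Delta(E|_S)$ is invisible, and for $k \ll 0$ the vanishing of $h^0$ is not accompanied by vanishing of $h^2$, so no contradiction arises for any sign of $\Delta$. The classical contradiction requires high tensor (or symmetric) powers $\left(\End(E|_S)\right)^{\otimes N}$, which stay slope-$0$ semistable in characteristic zero while $\ch_2/\rk = -N\Delta(E|_S)/r^2$ grows linearly in $N$ against bounds on $h^0$ and $h^2$ that are linear in the rank. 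Since you delegate this step to \cite{bog78, gie79} anyway, your overall chain of reasoning survives, but the description of the cited argument is wrong. Second, the remark that in positive characteristic ``the same inequality is Langer's theorem'' is false and contradicts the paper's own Section \ref{sec:positive}: there the point is precisely that $H^{n-2}\Delta(E) \geq 0$ \emph{fails} for merely $\mu_H$-semistable sheaves in positive characteristic, and Langer's theorem in \cite{lan04} requires strong semistability, e.g.\ under the hypothesis $\mu_{H, \max}(\Omega_X) \leq 0$. That aside should be deleted or corrected.
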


It is known that tilt-stable objects also satisfy the BG inequality: 
\begin{thm}[{\cite[Theorem 3.5]{bms16}}] \label{thm:bgtilt}
Every tilt-semistable object $E \in D^b(X)$ 
satisfies the inequality 
\[
\overline{\Delta}_H(E) \geq 0. 
\]
\end{thm}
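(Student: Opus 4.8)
The plan is to bootstrap the inequality from the classical case (Theorem~\ref{thm:bgusual}) by a wall-crossing argument in the parameter space of tilt-stability conditions. Two preliminary reductions simplify the bookkeeping. First, a direct computation shows that $\overline{\Delta}_H$ is unchanged if one replaces $\ch$ by the twisted character $\ch^\beta=e^{-\beta H}.\ch$; writing $(r_0,r_1,r_2):=(H^n\ch_0^\beta(E),\,H^{n-1}\ch_1^\beta(E),\,H^{n-2}\ch_2^\beta(E))$, we therefore have the twist-invariant expressions $\overline{\Delta}_H(E)=r_1^2-2r_0r_2$ and $\nu_{\beta,\alpha}(E)=(r_2-\alpha r_0)/r_1$. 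Second, if $H^n\ch_0(E)=0$ then $\overline{\Delta}_H(E)=(H^{n-1}\ch_1(E))^2\geq 0$ is automatic, so I may assume $r_0\neq 0$.

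The argument rests on two facts about the quadratic form $Q:=\overline{\Delta}_H$ and its polarization $q(v,w)=r_1(v)r_1(w)-r_0(v)r_2(w)-r_2(v)r_0(w)$. (i) \emph{Large-volume base case.} For a fixed $\beta$ and $\alpha\gg 0$, every $\nu_{\beta,\alpha}$-semistable object with $r_0\neq 0$ is, up to shift, a $\mu_H$-semistable torsion-free sheaf; hence $Q\geq 0$ there by Theorem~\ref{thm:bgusual}. (ii) \emph{Negative definiteness on the kernel.} The locus $\{r_1=0,\ r_2=\alpha r_0\}$ where $\nu_{\beta,\alpha}$ is ``infinite'' is a line spanned by a class $w$ with $Q(w)=-2\alpha r_0^2 < 0$ for $r_0\neq 0$ (using $\alpha>\beta^2/2\geq 0$); thus $Q$ is negative definite on this kernel.

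With these in hand the proof proceeds by descending from the large-volume chamber through the walls. Assuming local finiteness of the walls for a fixed numerical type, any $(\beta,\alpha)$ is joined to the large-volume region by a path meeting finitely many walls, and I would induct on the number of walls crossed. Across a single wall, a $\nu_{\beta,\alpha}$-semistable object $E$ degenerates into Jordan--H\"older factors $F_1,\dots,F_k$ of equal slope that are already semistable on the chamber side where $Q\geq 0$ is known, so $Q(F_i)\geq 0$ for every $i$. Equal slope means each combination $r_1(F_j)\,v(F_i)-r_1(F_i)\,v(F_j)$ lies in the kernel locus of (ii), so its value under $Q$ is $\leq 0$; expanding this and using $Q(F_i),Q(F_j)\geq 0$ together with $r_1(F_i),r_1(F_j)>0$ gives $q(F_i,F_j)\geq 0$ for all pairs. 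Summing the identity $Q(E)=\sum_i Q(F_i)+\sum_{i<j}2\,q(F_i,F_j)$ then yields $Q(E)\geq 0$, which propagates the inequality across the wall and completes the induction.

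I expect the two genuinely substantial inputs to be the large-volume identification in (i) --- this is the only point where the classical inequality of Theorem~\ref{thm:bgusual} enters, and it requires comparing the tilt-slope ordering with the slope ordering as $\alpha\to\infty$ --- and the local finiteness of walls that legitimises the descent and guarantees that the Jordan--H\"older factors persist as semistable objects into the neighbouring chamber. By contrast, the quadratic-form positivity in (ii) and the pairwise estimate $q(F_i,F_j)\geq 0$ are short computations once the structural results are available.
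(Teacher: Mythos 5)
The paper itself offers no proof of this statement: Theorem \ref{thm:bgtilt} is imported verbatim from \cite[Theorem 3.5]{bms16}, so the only meaningful comparison is with the argument in the literature. Your outline is indeed that argument: reduction to the large-volume limit where Theorem \ref{thm:bgusual} applies, negative definiteness of $\overline{\Delta}_H$ on the kernel of the central charge, and propagation across walls via Jordan--H\"older factors and the polarization identity. Your computations in (ii) and the estimate $q(F_i,F_j)\geq 0$ are correct (note they require $r_1(F_i),r_1(F_j)>0$, i.e.\ finite tilt-slope --- see below), and (i) is correct up to the standard caveat that a large-volume semistable object with $\mathcal{H}^{-1}(E)\neq 0$ may also carry a summand-like piece $\mathcal{H}^0(E)$ supported in codimension $\geq 2$; its contribution $H^{n-2}\ch_2(\mathcal{H}^0(E))\geq 0$ enters $\overline{\Delta}_H(E)$ with a favourable sign, so this only helps.

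The genuine gap is in the descent. You assert that the Jordan--H\"older factors $F_i$ of $E$ at a wall ``are already semistable on the chamber side where $Q\geq 0$ is known,'' and that local finiteness of walls ``guarantees that the Jordan--H\"older factors persist as semistable objects into the neighbouring chamber.'' Neither holds: the $F_i$ are semistable only \emph{at} the wall, and each may be destabilized immediately on the large-volume side, since a wall for $E$ need not be a wall for $F_i$, and $F_i$ may have a wall of its own crossing that very point. One must therefore recurse on the factors, and your induction ``on the number of walls crossed'' is not well-founded for that recursion: the factors have their own wall patterns, unrelated to the finitely many walls for $E$ along your path. The standard repair, which is what \cite{bms16} and \cite{bmt14a} actually do, is to induct on the discrete non-negative quantity $H^{n-1}\ch_1^{\beta}(E)$: after a reduction to rational $(\beta,\alpha)$ (legitimate because the semistable locus of $E$ is closed, walls carry dense rational points, and $\overline{\Delta}_H(E)$ is independent of the parameters), every Jordan--H\"older factor of a finite-slope strictly semistable object satisfies $0 < H^{n-1}\ch_1^{\beta}(F_i) < H^{n-1}\ch_1^{\beta}(E)$, so the recursion terminates. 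The base case $H^{n-1}\ch_1^{\beta}(E)=0$ --- objects of infinite tilt-slope, which your sketch never treats and for which your pairwise estimate breaks down --- must be handled directly: such an object has $\mathcal{H}^{-1}(E)$ either zero or $\mu_H$-semistable of slope exactly $\beta$ and $\mathcal{H}^0(E)$ supported in codimension $\geq 2$, and the inequality follows from Theorem \ref{thm:bgusual} applied to $\mathcal{H}^{-1}(E)$ together with $H^{n-2}\ch_2(\mathcal{H}^0(E))\geq 0$. With these two repairs your outline becomes the correct argument.
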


The following generalized BG type inequality on $\bP^3$ 
plays a crucial role in this paper: 
\begin{thm}[\cite{bmt14a, mac14}] \label{thm:bg3P3}
Let $\alpha, \beta \in \bR$ be real numbers with 
$\alpha > \beta^2/2$. 
For every $\nu_{\beta, \alpha}$-semistable object $E \in \Coh^\beta(\bP^3)$, 
we have the inequality 
\[
\left(2\alpha-\beta^2 \right)\overline{\Delta}_H(E)
+4\left(H\ch_2^\beta(E) \right)^2
-6H^2\ch_1^\beta(E)\ch_3^\beta(E)
\geq 0. 
\]
\end{thm}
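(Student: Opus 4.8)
The statement is due to Bayer--Macrì--Toda \cite{bmt14a} and Macrì \cite{mac14}, and my plan is to follow their strategy, which exploits the special geometry of $\bP^3$: the full strong exceptional collection $\mcO, \mcO(1), \mcO(2), \mcO(3)$ together with the Hirzebruch--Riemann--Roch theorem, which expresses the Euler characteristics $\chi(\mcO(k), E)$ linearly in the Chern characters of $E$, including $\ch_3$. Writing $\overline{\nabla}_{\beta,\alpha}(E)$ for the left-hand side, the first observation is that $\overline{\nabla}_{\beta,\alpha}$ is invariant under the autoequivalences that act on tilt-stability: tensoring by $\mcO(k)$ together with the shift $\beta \mapsto \beta + k$ preserves the inequality, and the derived dual combined with a shift sends $\beta \mapsto -\beta$. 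Using these symmetries I would reduce to the case where $\beta$ lies in a bounded fundamental domain, and after a continuity argument I may assume that $(\beta, \alpha)$ lies off every numerical wall, so that $E$ is in fact $\nu_{\beta,\alpha}$-stable; the reduction from semistable to stable objects is handled by a Hodge-index-type positivity showing that $\overline{\nabla}_{\beta,\alpha}$ is preserved under extensions of objects of equal tilt-slope.

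The core of the argument is to bound $\ch_3^\beta(E)$ for a $\nu_{\beta,\alpha}$-stable object $E$ by comparing it with the line bundles $\mcO(k)$. First I would record that each $\mcO(k)$, or an appropriate shift, is itself tilt-stable, and locate the walls where its tilt-slope crosses that of $E$. Away from these walls, stability forces a slope inequality between $\mcO(k)$ and $E$ in the relevant tilted heart, which makes $\Hom(\mcO(k), E)$ vanish; applying the same reasoning to the Serre-dual pairing kills $\Ext^3(\mcO(k), E)$. With the extreme $\Ext$ groups controlled, the Euler characteristic $\chi(\mcO(k), E) = \int_{\bP^3} \ch(\mcO(-k))\,\ch(E)\,\td(\bP^3)$ acquires a definite sign, and since this integral is linear in $\ch_3(E)$ it yields a linear bound on $\ch_3^\beta(E)$ whose coefficients depend on $k$ and on $(\beta,\alpha)$. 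Finally I would optimize this family of bounds over the admissible $k$ (equivalently, choose the test object whose slope is closest to that of $E$) and feed in the tilt-Bogomolov--Gieseker inequality $\overline{\Delta}_H(E) \geq 0$ of Theorem \ref{thm:bgtilt}; the resulting expression is exactly the quadratic inequality $\overline{\nabla}_{\beta,\alpha}(E) \geq 0$ after rearrangement.

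The main obstacle is the simultaneous control of all the groups $\Ext^i(\mcO(k), E)$: the vanishing of the extreme ones requires knowing in which tilted heart, and with which tilt-slope, each $\mcO(k)$ and its shifts sit relative to $E$, and this depends delicately on $(\beta, \alpha)$ and on a careful description of the walls. The cleanest way to organize this is to pass to the double-tilted heart obtained by tilting $\Coh^\beta(\bP^3)$ once more at the tilt-slope; in a suitable region of parameters this heart is equivalent to the category of finite-dimensional representations of the Beilinson quiver of $\bP^3$, with $\mcO, \mcO(1), \mcO(2), \mcO(3)$ as the standard objects, and there the required $\Hom$ and $\Ext$ vanishings become transparent. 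Propagating the inequality out of this quiver region to all $(\beta, \alpha)$ with $\alpha > \beta^2/2$ is then a wall-crossing argument, using that $\overline{\Delta}_H \geq 0$ bounds the discriminants of the stable factors and that the numerical walls are locally finite; verifying that $\overline{\nabla}_{\beta,\alpha}$ is preserved across each wall is the remaining technical point.
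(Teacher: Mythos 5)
Note first that the paper itself contains no proof of Theorem \ref{thm:bg3P3}: it is imported verbatim from \cite{bmt14a, mac14} and used as a black box in Sections \ref{sec:wc} and \ref{sec:hyper}. Your attempt therefore has to be measured against the cited proofs (Bayer--Macr\`i--Toda for the framework and the reduction, Macr\`i for the case of $\bP^3$). At the level of architecture your plan is faithful to them: the invariance under $-\otimes\mcO(1)$ (shifting $\beta \mapsto \beta+1$) and under the derived dual ($\beta \mapsto -\beta$), the reduction from semistable to stable objects by a quadratic-form convexity argument, the use of the Beilinson collection $\mcO, \ldots, \mcO(3)$ together with the identification of a double-tilted heart with a quiver heart on suitable regions of $(\beta,\alpha)$, and the propagation by wall-crossing with an induction on $\overline{\Delta}_H$ are all genuine ingredients of Macr\`i's argument.

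Two concrete gaps separate your text from a proof. First, the mechanism in your second paragraph does not by itself produce any bound: the vanishings $\Hom(\mcO(k),E)=0$ and $\Ext^3(\mcO(k),E)=0$ alone give $\chi(\mcO(k),E) = -\ext^1(\mcO(k),E) + \ext^2(\mcO(k),E)$, which has no definite sign, so no linear bound on $\ch_3^\beta(E)$ follows. One needs vanishing in two consecutive degrees, and arranging that is precisely the role of the second tilt: a $\nu_{\beta,\alpha}$-stable object with $\nu_{\beta,\alpha}(E)=0$ becomes, after a shift, an object of the double-tilted heart, where it can be tested against the shifted exceptional line bundles. Second, your final assembly inverts the logic of the cited proof. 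In \cite{bmt14a} (see also the reformulation in \cite{bms16}) the quadratic inequality for all $(\beta,\alpha)$ and all tilt-stable objects is shown to be \emph{equivalent} to its special case at $\nu_{\beta,\alpha}(E)=0$, where it degenerates to a bound on $\ch_3^\beta(E)$ that is \emph{linear} in $\ch(E)$ once $(\beta,\alpha)$ is fixed; this linearity is what allows Macr\`i to verify the inequality on a quiver region by writing the class of $E$ as a non-negative combination of the classes of the shifted exceptional generators, and it is what makes the wall-crossing induction work (at a wall the Jordan-H{\"o}lder factors again have $\nu = 0$ and strictly smaller $\overline{\Delta}_H$). Obtaining the quadratic statement ``directly by rearrangement'' from Euler-characteristic estimates plus $\overline{\Delta}_H(E) \geq 0$, as you propose, is not how the argument runs, and as written that step is unjustified.
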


\section{Wall-crossing arguments} \label{sec:wc}
In this section, we summarize various wall-crossing arguments in tilt-stability, 
developed in \cite{bms16, fey16, li19a, kos20}, and others. 
As in the previous section, we denote by $X$ 
a smooth projective variety of dimension $n \geq 2$, 
and by $H$ an ample divisor on $X$. 

We put 
$\mcS:=\left\{(\beta, \alpha) \colon \alpha > \beta^2/2 \right\} \subset \bR^2$, 
and call it as {\it a space of tilt-stability conditions}. 
Recall that a {\it wall} for an object $E \in D^b(X)$ 
with respect to tilt-stability 
is defined as a connected component of solutions $(\beta, \alpha) \in \mcS$ 
of an equation $\nu_{\beta, \alpha}(F)=\nu_{\beta, \alpha}(E)$ 
for an inclusion $F \subset E$ in the tilted heart $\Coh^\beta(X)$, 
where $F \in \Coh^\beta(X)$ is a tilt-semistable object.  
It is easy to see that a wall $W$ for $E$ is a line segment, 
and satisfies one of the following two properties: 
\begin{enumerate}
\item $W$ passes through the point $p_H(E)$ 
when $\ch_0(E) \neq0$, 
\item $W$ has a fixed slope $H^{n-2}\ch_2(E)/H^{n-1}\ch_1(E)$ 
when $\ch_0(E)=0$. 
\end{enumerate}
Here, for an object $E \in D^b(X)$ with $\ch_0(E) \neq 0$, 
we define the point $p_H(E) \in \bR^2$ as follows: 
\[
p_H(E):=\left(
	\frac{H^{n-1}\ch_1(E)}{H^n\ch_0(E)}, 
	\frac{H^{n-2}\ch_2(E)}{H^n\ch_0(E)}
	\right). 
\]

\subsection{Restriction lemma for tilt-stability}
First let us recall Feyzbakhsh's restriction lemma from \cite{fey16} 
(see also \cite[Lemma 5.1]{li19b}): 
\begin{lem}[{\cite[Corollary 4.3]{fey16}}] \label{lem:feyrestr}
Let $d \geq 1$ be a positive integer, 
$\alpha>0$ a positive real number. 
Let $E \in \Coh^0(X)$ be a slope stable reflexive sheaf. 
Suppose that the following conditions hold: 
\begin{itemize}
\item we have $E(-dH)[1] \in \Coh^0(X)$,  
\item the objects $E, E(-dH)[1] \in \Coh^0(X)$ are $\nu_{0, \alpha}$-stable, 
\item we have the equality $\nu_{0, \alpha}(E)=\nu_{0, \alpha}\left(E(-dH)[1] \right)$. 
\end{itemize}
Then for any irreducible hypersurface $Y_d \in |dH|$, 
the restriction $E|_{Y_d}$ is $\mu_{H_{Y_d}}$-stable. 
\end{lem}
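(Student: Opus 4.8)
The plan is to restrict $E$ along a divisor $Y_d\in|dH|$ and to translate the three hypotheses into a single statement about the pushforward of $E|_{Y_d}$ inside the tilted heart $\Coh^0(X)$. Write $\iota\colon Y_d\hookrightarrow X$ for the inclusion. First I would start from the tautological short exact sequence of sheaves $0\to E(-dH)\to E\to \iota_*(E|_{Y_d})\to 0$ given by multiplication with the section cutting out $Y_d$, and read it as a distinguished triangle $E(-dH)\to E\to \iota_*(E|_{Y_d})\to E(-dH)[1]$ in $D^b(X)$. Rotating this triangle and using the hypotheses $E,\,E(-dH)[1]\in\Coh^0(X)$ together with the fact that the torsion sheaf $\iota_*(E|_{Y_d})$ lies in $\mcT_0\subset\Coh^0(X)$, all three vertices of the rotated triangle lie in the heart, so I obtain a genuine short exact sequence $0\to E\to \iota_*(E|_{Y_d})\to E(-dH)[1]\to 0$ in $\Coh^0(X)$. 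Since $E$ and $E(-dH)[1]$ are $\nu_{0,\alpha}$-stable of equal slope by hypothesis, $\iota_*(E|_{Y_d})$ is $\nu_{0,\alpha}$-semistable, and its Jordan--H\"older factors are exactly $E$ and $E(-dH)[1]$; these are distinct because $\ch_0(E)>0$ while $\ch_0(E(-dH)[1])<0$.

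Next I would set up the dictionary between tilt-stability of pushforwards and slope-stability on $Y_d$. A Grothendieck--Riemann--Roch computation for the closed immersion $\iota$ gives, for every coherent sheaf $A$ on $Y_d$, the identity $\nu_{0,\alpha}(\iota_*A)=\mu_{H_{Y_d}}(A)-d/2$, an increasing affine relation whose additive constant is universal (in particular independent of $A$ and of $\alpha$). Because $\iota_*$ is exact, a subsheaf $A\subset E|_{Y_d}$ gives rise to a subobject $\iota_*A\subset\iota_*(E|_{Y_d})$ in $\Coh^0(X)$, and the displayed identity converts slope comparisons on $Y_d$ into tilt-slope comparisons on $X$. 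Applying this to the $\nu_{0,\alpha}$-semistability just established gives $\mu_{H_{Y_d}}(A)\le\mu_{H_{Y_d}}(E|_{Y_d})$ for every subsheaf $A$, so $E|_{Y_d}$ is $\mu_{H_{Y_d}}$-semistable; moreover the same semistability forbids any subsheaf of $\iota_*(E|_{Y_d})$ supported in dimension $\le n-2$ (such a subsheaf would have tilt-slope $+\infty$), so $E|_{Y_d}$ is torsion-free on the integral variety $Y_d$.

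The main obstacle is the passage from semistability to \emph{strict} stability, and this is where care is needed: the object $\iota_*(E|_{Y_d})$ is only strictly $\nu_{0,\alpha}$-semistable (it contains $E$ as a proper subobject of equal slope), so stability cannot be read off directly. Instead I would argue by contradiction. Suppose $A\subset E|_{Y_d}$ is a subsheaf with $0<\rk A<\rk(E|_{Y_d})$ and $\mu_{H_{Y_d}}(A)=\mu_{H_{Y_d}}(E|_{Y_d})$. Then $\iota_*A$ is a subobject of $\iota_*(E|_{Y_d})$ of the same tilt-slope; being a subobject of equal (hence maximal) slope in a semistable object, it is itself $\nu_{0,\alpha}$-semistable, and its Jordan--H\"older factors form a sub-multiset of $\{E,\,E(-dH)[1]\}$. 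Since $\iota_*A$ is a nonzero torsion sheaf we have $\ch_0(\iota_*A)=0$, while $\ch_0(E)$ and $\ch_0(E(-dH)[1])$ have opposite signs; hence the only sub-multiset with total rank zero that contains a nonzero object is the whole set $\{E,\,E(-dH)[1]\}$. But then $\ch_1(\iota_*A)=\ch_1(\iota_*(E|_{Y_d}))$, which forces $\rk A=\rk(E|_{Y_d})$, contradicting $\rk A<\rk(E|_{Y_d})$. Therefore every such $A$ satisfies the strict inequality $\mu_{H_{Y_d}}(A)<\mu_{H_{Y_d}}(E|_{Y_d})$, and $E|_{Y_d}$ is $\mu_{H_{Y_d}}$-stable. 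The only delicate verifications are the heart-membership and rotation step of the first paragraph and the Jordan--H\"older bookkeeping in the last; the Riemann--Roch identity is a routine computation.
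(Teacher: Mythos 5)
The paper gives no proof of this lemma: it is imported verbatim from \cite[Corollary 4.3]{fey16}, so there is no in-paper argument to compare against. Your proof is correct, and it is essentially Feyzbakhsh's original argument: the multiplication sequence $0 \to E(-dH) \to E \to \iota_*(E|_{Y_d}) \to 0$ (exact on the left because $E$ is torsion free --- this is where the reflexivity hypothesis enters your write-up) rotates to a short exact sequence $0 \to E \to \iota_*(E|_{Y_d}) \to E(-dH)[1] \to 0$ in $\Coh^0(X)$, exhibiting $\iota_*(E|_{Y_d})$ as $\nu_{0,\alpha}$-semistable with Jordan--H\"older factors $E$ and $E(-dH)[1]$; pushing forward a subsheaf of $E|_{Y_d}$ and doing the Jordan--H\"older bookkeeping then yields both semistability and the strict inequality. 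Two points deserve tightening. First, your identity $\nu_{0,\alpha}(\iota_*A)=\mu_{H_{Y_d}}(A)-d/2$ is obtained by Grothendieck--Riemann--Roch on $Y_d$, which presumes $Y_d$ smooth, whereas the lemma allows an arbitrary irreducible hypersurface; for singular $Y_d$ one should define $\mu_{H_{Y_d}}(A)$ via the Hilbert polynomial of $A$, equivalently via $\ch(\iota_*A)$ computed on the smooth ambient $X$, after which the identity holds by Riemann--Roch on $X$ --- a definitional fix rather than a gap. Second, in the final step the case analysis is cleaner if phrased as: a singleton Jordan--H\"older multiset would force $\iota_*A \cong E$ or $\iota_*A \cong E(-dH)[1]$, impossible since $\iota_*A$ is a torsion sheaf with $\ch_0(\iota_*A)=0$ while those objects have $\ch_0 = \pm\,\ch_0(E) \neq 0$; this is what your rank count says, and together with the full-multiset case (which forces $\rk A = \rk E|_{Y_d}$) it completes the contradiction. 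You also implicitly use that tilt-semistable objects of a fixed slope form a finite-length abelian category, so that Jordan--H\"older multisets are well defined and additive in short exact sequences; this is standard (see \cite{bms16, ms17}) but worth citing.
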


We use the following terminology: 
\begin{defin}
Fix an integer $d \geq 0$. 
We say that a function $f \colon [0, 1] \to \bR$ 
is {\it star-shaped along the line $\beta=d$} 
if the following condition holds: 
for every real number $t \in [0, 1]$, 
the line segment connecting the point 
$(t, f(t))$ and the point $(d, d^2/2)$ 
is above the graph of $f$. 
\end{defin}

We will use the following variant of \cite[Proposition 5.2]{li19b}: 

\begin{lem}[cf. {\cite[Proposition 5.2]{li19b}}] \label{lem:restriction}
Let $d \geq 1$ be an integer, 
and $f \colon [0, 1] \to \bR$ be a star-shaped function 
along the lines $\beta=0, d$ 
with $f(0)=0, f(1)=1/2$, satisfying 
\[
t^2-\frac{d}{2}t \leq f(t) \leq \frac{1}{2}t^2
\]
for every $t \in [0, 1]$. 
Assume that there exist objects 
$E' \in D^b(X)$ satisfying the following conditions: 
\begin{enumerate}
\item [(a)] $E'$ is either $\nu_{0, \alpha}$-semistable for some $\alpha>0$, 
or $\nu_{d, \alpha'}$-semistable for some $\alpha' > d^2/2$. 

\item[(b)] $\mu_H(E') \in [0, 1], \quad 
\frac{H^{n-2}\ch_2(E')}{H^{n-1}\ch_0(E')} > f\left(\mu_H(E') \right)$. 
\end{enumerate}
Then we can choose such an object $E$ so that 
the restriction $E|_{Y_d}$ of $E$ 
to an irreducible hypersurface 
$Y_d \in |dH|$ is $\mu_{H_{Y_d}}$-semistable. 
\end{lem}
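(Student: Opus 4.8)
The plan is to leverage Feyzbakhsh's restriction lemma (Lemma \ref{lem:feyrestr}), which requires producing a slope-stable reflexive sheaf $E$ together with a partner object $E(-dH)[1]$ so that both are $\nu_{0,\alpha}$-stable and share the same $\nu_{0,\alpha}$-slope on a wall. The hypotheses (a) and (b) give us an object $E'$ lying strictly above the star-shaped function $f$, and the star-shaped condition is precisely what guarantees, via the geometry of walls, that we can locate a point $(\beta,\alpha)\in\mathcal{S}$ where the requisite numerical equality $\nu_{0,\alpha}(E)=\nu_{0,\alpha}(E(-dH)[1])$ holds. So the first step is to reduce to the case where $E'$ is $\nu_{0,\alpha}$-semistable (the $\nu_{d,\alpha'}$-semistable case should follow by the symmetry $\beta\leftrightarrow d-\beta$ built into the two star-shaped conditions along $\beta=0$ and $\beta=d$).

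\medskip

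Next I would replace $E'$ by a genuinely tilt-stable object. By the existence of Jordan–Hölder-type filtrations for tilt-stability, I would pass to a stable factor $E$ of $E'$ that still violates the bound $f$; here the point is that at least one tilt-stable Jordan–Hölder factor must lie on or above the relevant line through $(d,d^2/2)$, using convexity of the chord and the star-shaped hypothesis to ensure the inequality in (b) survives. I would then want to arrange that this stable object is a reflexive sheaf sitting in $\Coh^0(X)$ (rather than a genuine two-term complex), which is where the condition $\mu_H(E')\in[0,1]$ and the lower bound $t^2-\tfrac{d}{2}t\le f(t)$ enter: these keep us in the regime where tilt-stable objects of the relevant slope are actual sheaves, and one invokes the standard fact that a tilt-stable sheaf can be taken reflexive after reflexive hull without changing the relevant Chern data.

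\medskip

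The central step is the wall-crossing argument. Consider the point $p_H(E)$ and the vertical line $\beta=0$. The object $E(-dH)[1]$ has Chern character obtained by the twist $e^{-dH}$, so its point $p_H(E(-dH)[1])$ is a prescribed translate of $p_H(E)$; the locus where $\nu_{0,\alpha}(E)=\nu_{0,\alpha}(E(-dH)[1])$ is a line segment in $\mathcal{S}$ whose existence and intersection with the region above $f$ is controlled by the two inequalities bounding $f$ between $t^2-\tfrac{d}{2}t$ and $\tfrac12 t^2$. I would show that as $\alpha$ varies, the common-slope equation has a solution at some $\alpha>0$ for which both $E$ and $E(-dH)[1]$ remain tilt-stable — this uses Theorem \ref{thm:bgtilt} to bound walls, together with the star-shaped property to rule out destabilizing subobjects before we reach the desired wall. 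I expect \textbf{this verification that no wall for $E$ or $E(-dH)[1]$ is crossed before reaching the common-slope locus} to be the main obstacle, since it requires a careful numerical analysis of potential destabilizing sub- and quotient objects and is exactly the place where the precise shape of $f$ (via the star-shaped conditions along \emph{both} $\beta=0$ and $\beta=d$) is used.

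\medskip

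Once $E$ and $E(-dH)[1]$ are verified to satisfy all three bullet conditions of Lemma \ref{lem:feyrestr} at the chosen parameter, that lemma immediately yields that $E|_{Y_d}$ is $\mu_{H_{Y_d}}$-stable for any irreducible $Y_d\in|dH|$, hence in particular $\mu_{H_{Y_d}}$-semistable, which is the claimed conclusion. The final bookkeeping step is to confirm that the restriction of the stable factor $E$ (rather than the original $E'$) is the object we want; since we only need \emph{some} such $E$ to exist (the statement says ``we can choose such an object $E$''), passing to the stable reflexive factor is legitimate and completes the argument.
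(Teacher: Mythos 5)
Your overall skeleton matches the paper's: both arguments reduce to Feyzbakhsh's restriction lemma (Lemma \ref{lem:feyrestr}), and you correctly observe that the lower bound $t^2-\tfrac{d}{2}t \leq f(t)$ is what places the common-slope locus of $E$ and $E(-dH)[1]$ at a point $(0,\alpha_0)$ of the $\alpha$-axis with $\alpha_0>0$. But there is a genuine gap at exactly the step you yourself flag as ``the main obstacle'': you never explain how to rule out walls for $E$ (and for $E(-dH)[1]$) before reaching that locus, and the ``careful numerical analysis of potential destabilizing sub- and quotient objects'' you defer is not a viable way to finish. A one-step replacement of $E'$ by a Jordan--H\"older factor at a single wall does not terminate: the factor may itself be destabilized at a later wall as $\alpha$ decreases, and nothing in your sketch controls this recursion or shows it stops before the common-slope locus.

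The paper's key idea, absent from your proposal, is a minimality argument that makes all wall analysis unnecessary. Among all objects satisfying (a) and (b), choose $E$ with \emph{minimal} discriminant $\overline{\Delta}_H$; this minimum exists because $\overline{\Delta}_H \geq 0$ for tilt-semistable objects (Theorem \ref{thm:bgtilt}) and Chern characters lie in a lattice. If $E$ had a wall along $\beta=0$ (or along $\beta=d$, or the vertical wall), some Jordan--H\"older factor $F$ would have $p_H(F)$ on the segment joining $p_H(E)$ to a point of the $\alpha$-axis; the star-shaped hypothesis then shows $F$ again satisfies (a) and (b), while \cite[Corollary 3.10]{bms16} gives $\overline{\Delta}_H(F) < \overline{\Delta}_H(E)$, contradicting minimality. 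Hence $E$ is $\nu_{0,\alpha}$-stable for \emph{all} $\alpha>0$ and $\nu_{d,\alpha'}$-stable for all $\alpha'>d^2/2$, so in particular at the wall where $\nu_{0,\alpha}(E)=\nu_{0,\alpha}(E(-dH)[1])$. Note also that your treatment of sheafiness and reflexivity is off: passing to the reflexive hull does change $\ch_2$ and there is no reason tilt-stability at a given parameter survives this operation. In the paper, $E$ is automatically a sheaf because it is $\nu_{0,\alpha}$-stable for $\alpha \gg 0$, and it is reflexive because $E(-dH)[1]$ is tilt-stable, by the argument in the first paragraph of the proof of \cite[Lemma 5.1]{li19b}.
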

\begin{proof}
By Theorem \ref{thm:bgtilt}, 
every tilt-semistable object $E$ 
satisfies the inequality $\overline{\Delta}_H(E) \geq 0$. 
Hence we may choose an object $E$ 
which has the minimum discriminant $\overline{\Delta}_H$ among 
those satisfying the conditions (a) and (b). 
We claim that such an object $E$ is 
$\nu_{0, \alpha}$-stable for all $\alpha>0$, 
and $\nu_{d, \alpha'}$-stable for all $\alpha' > d^2/2$. 
Assume for a contradiction that 
there is a wall for $E$ along the line $\beta=0$. 
Then there exists a Jordan-H{\"o}lder factor $F$ of $E$ 
such that the point $p_H(F)$ lies on the line segment 
connecting $p_H(E)$ and $(0, \alpha)$ for some $\alpha >0$. 
As we assume the function $f$ is star-shaped along 
the line $\beta=0$, 
the object $F$ also satisfies the conditions (a) and (b). 
Moreover, we have $\overline{\Delta}_H(F) < \overline{\Delta}_H(E)$ 
by \cite[Corollary 3.10]{bms16}, 
which contradicts the minimality assumption of $\overline{\Delta}(E)$. 
Similarly, we can see that the object $E$ cannot be destabilized 
along the line $\beta=d$ or the vertical wall $\beta=\mu_H(E)$. 

Hence the objects $E, E(-dH)[1] \in \Coh^0(X)$ 
are $\nu_{0, \alpha}$-stable for all $\alpha >0$. 
In particular, the $\nu_{0, \alpha}$-stability of $E$ for $\alpha \gg 0$ 
implies that $E$ is a coherent sheaf. 
Moreover, as in the first paragraph of the proof of \cite[Lemma 5.1]{li19b}, 
the $\nu_{0, \alpha}$-stability of $E(-dH)[1]$ implies that $E$ is reflexive. 
Note also that, by the assumption 
$t^2-\frac{d}{2}t \leq f(t)$, the line passing through 
the points $p_H(E)$ and $p_H(E(-dH))[1]$ 
intersects with the $\alpha$-axis at $(0, \alpha_0)$ 
for some positive real number $\alpha_0$. 
Hence the assumptions of Lemma \ref{lem:feyrestr} are satisfied, 
and we can conclude that 
the restriction $E|_{Y_d}$ is slope semistable. 
\end{proof}

\subsection{Strong BG inequalities for tilt-stabile objects}
Let $\mcD$ be a set of objects $E \in \Coh^0(X)$ 
satisfying one of the following conditions: 
\begin{itemize}
\item $E \in \Coh(X)$ and it is $\mu_H$-semistable with $\ch_0(E) >0$, 
\item $\mcH^{-1}(E)$ is $\mu_H$-semistable and $\dim \mcH^0(E) \leq n-2$. 
\end{itemize}
We will also use the following lemma: 
\begin{prop}[{\cite[Proposition 2.5]{kos20}}] \label{prop:tiltbg} 
Let $f \colon [0, 1] \to \bR$ be a star-shaped function along the line $\beta=0$. 
Assume that for every object $E \in \mcD$, the inequality 
\[
\frac{H^{n-2}\ch_2(E)}{H^{n}\ch_0(E)} 
\leq f\left(\mu_H(E) \right) 
\]
holds. Then for every $\alpha >0$ and 
$\nu_{0, \alpha}$-semistable object $E$ with $\ch_0(E) \neq 0$, 
the same inequality holds. 
\end{prop}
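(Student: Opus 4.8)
The plan is to argue by contradiction, running a wall-crossing argument along the ray $\beta = 0$ in order to reduce an arbitrary $\nu_{0,\alpha}$-semistable object of nonzero rank to an object of $\mcD$, where the hypothesis applies verbatim. Suppose the conclusion fails, and collect all counterexamples: objects $E \in \Coh^0(X)$ with $\ch_0(E) \neq 0$, $\mu_H(E) \in [0,1]$, which are $\nu_{0,\alpha}$-semistable for some $\alpha > 0$ and satisfy $H^{n-2}\ch_2(E)/(H^n \ch_0(E)) > f(\mu_H(E))$, i.e. whose point $p_H(E)$ lies strictly above the graph of $f$. By Theorem \ref{thm:bgtilt} every such $E$ has $\overline{\Delta}_H(E) \ge 0$, and since these discriminants are non-negative and form a discrete set of reals, I would choose a counterexample $E$ with minimal $\overline{\Delta}_H(E)$ and derive a contradiction from it.

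The next step is to show, exactly as in the proof of Lemma \ref{lem:restriction}, that this minimal counterexample is in fact $\nu_{0,\alpha}$-stable for every $\alpha > 0$. If $E$ were destabilized at some wall on the ray $\beta = 0$, that wall would be a segment through $p_H(E)$ meeting the $\alpha$-axis at a point $(0, \alpha_W)$ with $\alpha_W > 0$, and one of the Jordan--H\"older factors $F$ would have $p_H(F)$ on the portion of the wall lying between $p_H(E)$ and $(0,\alpha_W)$; in particular $\mu_H(F) \in [0, \mu_H(E)] \subseteq [0,1]$ and $\ch_0(F) \neq 0$. Since $f$ is star-shaped along $\beta = 0$, the chord from $(0,0)$ to $(\mu_H(E), f(\mu_H(E)))$ lies above the graph of $f$; combined with $\alpha_W > 0$ and the fact that $p_H(E)$ lies strictly above the graph, an elementary comparison of the two segments shows that $p_H(F)$ also lies above the graph, so $F$ is again a counterexample. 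By \cite[Corollary 3.10]{bms16} one has $\overline{\Delta}_H(F) < \overline{\Delta}_H(E)$, contradicting minimality. Hence $E$ admits no wall on the ray, and is $\nu_{0,\alpha}$-stable for all $\alpha > 0$.

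Finally I would pass to the large-volume limit $\alpha \gg 0$. The standard classification of tilt-semistable objects of nonzero rank for large $\alpha$ shows that such an $E$ must belong to $\mcD$: either $\mcH^{-1}(E) = 0$ and $E$ is a $\mu_H$-semistable sheaf with $\ch_0(E) > 0$, or $\mcH^{-1}(E)$ is $\mu_H$-semistable and $\mcH^0(E)$ is supported in dimension $\le n-2$, the point being that a divisorial component of $\mcH^0(E)$ would give a quotient of bounded $\nu_{0,\alpha}$-slope destabilizing $E$ as $\alpha \to \infty$. But then the hypothesis of the proposition applies to $E$ and yields $H^{n-2}\ch_2(E)/(H^n\ch_0(E)) \le f(\mu_H(E))$, contradicting that $E$ is a counterexample. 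This contradiction proves the claim.

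The main obstacle is the wall-crossing step: one must verify that the destabilizing factor selected on the axis side of the wall genuinely has slope in $[0,1]$ and nonzero rank, and that the star-shaped inequality transfers the strict violation from $E$ to it. The large-volume classification placing $E$ in $\mcD$ is the other essential input, but it is by now standard.
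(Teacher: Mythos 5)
Your proof is correct. The paper itself gives no proof of Proposition \ref{prop:tiltbg} (it is quoted from \cite[Proposition 2.5]{kos20}), but your argument — pick a counterexample of minimal $\overline{\Delta}_H$ (legitimate by Theorem \ref{thm:bgtilt} and discreteness of the relevant Chern numbers), rule out walls along $\beta=0$ by transferring the violation of $f$ to a Jordan--H\"older factor with strictly smaller discriminant via the star-shaped property and \cite[Corollary 3.10]{bms16}, then identify the resulting wall-free object with an element of $\mcD$ in the large-volume limit — is exactly the standard strategy, and is the same wall-crossing scheme the paper itself runs in the proof of Lemma \ref{lem:restriction}.
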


\subsection{Bounding first walls for torsion sheaves}
In the proof of Theorem \ref{thm:hyperintro}, 
it is important to bound the first possible wall for 
sheaves supported on divisors. 
The following lemma is a useful general fact: 
\begin{lem}[{\cite[Lemma 3.6]{kos20}}, \cite{li19b}] \label{lem:firstwall}
Let $Y_d \in |dH|$ be a smooth hypersurface of degree $d \geq 1$, 
denote by $\iota \colon Y_d \hookrightarrow X$ the embedding. 
Let $E \in \Coh(Y_d)$ be a $\mu_{H_{Y_d}}$-semistable torsion free sheaf. 

Assume that there exists a wall 
for $\iota_*E \in \Coh^0(X)$ with respect to tilt-stability 
with end points $(\beta_1, \alpha_1), (\beta_2, \alpha_2)$ satisfying 
$\beta_1 < 0 < \beta_2$. 
Then we have $\beta_2-\beta_1 \leq d$. 
\end{lem}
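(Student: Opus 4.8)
The plan is to convert the statement into the geometry of chords of the boundary parabola $\alpha=\beta^2/2$, reduce it to a one-sided bound on the right-hand endpoint, and then close that bound using the semistability of $E$.

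First I would record the numerical invariants of $\iota_*E$. By Grothendieck--Riemann--Roch (equivalently, adjunction for the divisor $Y_d\in|dH|$, whose normal bundle is $\mcO_{Y_d}(dH)$) one gets $\ch_0(\iota_*E)=0$, $H^{n-1}\ch_1(\iota_*E)=d\cdot\rk(E)\cdot H^n$, and the reduced slope
\[
\frac{H^{n-2}\ch_2(\iota_*E)}{H^{n-1}\ch_1(\iota_*E)}=\mu-\tfrac{d}{2},\qquad \mu:=\mu_{H_{Y_d}}(E).
\]
Since $\ch_0(\iota_*E)=0$, the wall-description recalled in Section~\ref{sec:wc} shows that the wall is a line segment of slope $\mu-\tfrac d2$ in the $(\beta,\alpha)$-plane whose endpoints lie on the parabola $\alpha=\beta^2/2$. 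As any chord of this parabola of slope $\sigma$ meets it in two points whose $\beta$-coordinates sum to $2\sigma$, we obtain $\beta_1+\beta_2=2\mu-d$. Therefore the desired inequality $\beta_2-\beta_1\le d$ is \emph{equivalent} to the single one-sided bound $\beta_2\le\mu$ (equivalently $\beta_1\ge\mu-d$).

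Next I would analyze a destabilizing sequence. Let $F\hookrightarrow\iota_*E$ with quotient $Q$ realize the wall in $\Coh^\beta(X)$, with $F,Q$ tilt-semistable of the common slope; by Theorem~\ref{thm:bgtilt} both satisfy $\overline{\Delta}_H\ge0$. Since torsion subobjects have constant tilt-slope and so define no wall, the destabilizer has positive rank, $a_0:=H^n\ch_0(F)>0$; writing also $a_1:=H^{n-1}\ch_1(F)$ and $a_2:=H^{n-2}\ch_2(F)$, a direct computation of where the wall line meets the parabola turns the target $\beta_2\le\mu$, after clearing the (positive) denominators, into the estimate
\[
\overline{\Delta}_H(F)\ \ge\ a_0^2\bigl(\mu_H(F)-\mu\bigr)\bigl(\mu_H(F)-\mu+d\bigr).
\]

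The crux is this last inequality, and here both the hypothesis $\beta_1<0<\beta_2$ and the $\mu_{H_{Y_d}}$-semistability of $E$ are used. Because $0$ lies in the open wall, already $F\hookrightarrow\iota_*E$ in $\Coh^0(X)$; the long exact cohomology sequence of the defining triangle forces $\mcH^{-1}(F)=0$, so $F$ is a genuine sheaf fitting into $0\to K\to F\to\iota_*E'\to0$, where $K:=\mcH^{-1}(Q)$ is torsion-free with $\mu_{H,\max}(K)\le0$ and $E'\subseteq E$ is a subsheaf on $Y_d$. Semistability of $E$ gives $\mu_{H_{Y_d}}(E')\le\mu$. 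The main obstacle is that tilt-stability of $F$ alone ($\overline{\Delta}_H(F)\ge0$) does \emph{not} suffice: it yields the estimate only when $\mu_H(F)\in[\mu-d,\mu]$, whereas $\mu_H(F)\ge\mu_{H,\min}(F)\ge\beta_2$ may exceed $\mu$. The resolution is to bound $H^{n-2}\ch_2(F)=H^{n-2}\ch_2(K)+H^{n-2}\ch_2(\iota_*E')$ term by term — the classical Bogomolov--Gieseker inequality (Theorem~\ref{thm:bgusual}) on $Y_d$ controlling the $E'$-part through $\mu_{H_{Y_d}}(E')\le\mu$, and the negativity $\mu_{H,\max}(K)\le0$ controlling the $K$-part — and then combine these with $\overline{\Delta}_H(F),\overline{\Delta}_H(Q)\ge0$ to deduce the displayed bound. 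Carrying out this bookkeeping, and verifying that $E=\mcO_{Y_d}$ (for which $\beta_2-\beta_1=d$ with $\beta_2=\mu=0$) is exactly the equality case, is the technical heart of the argument.
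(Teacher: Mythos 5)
Your reductions are all correct as far as they go: the wall is a chord of $\alpha=\beta^2/2$ of slope $\mu-\tfrac d2$, so $\beta_1+\beta_2=2\mu-d$ and the claim is equivalent to $\beta_2\le\mu$; the destabilizer has positive rank and sits in $0\to K\to F\to\iota_*E'\to0$ with $K=\mcH^{-1}(Q)$; and $\beta_2\le\mu$ is indeed equivalent to $\overline{\Delta}_H(F)\ge a_0^2\bigl(\mu_H(F)-\mu\bigr)\bigl(\mu_H(F)-\mu+d\bigr)$. But the proof stops exactly where the content of the lemma lies: this last inequality is left as ``bookkeeping'', and the ingredients you propose for it cannot yield it. First, the classical BG inequality on $Y_d$ applied to $E'$ is vacuous at this level: by Grothendieck--Riemann--Roch, $H^{n-2}\ch_2(\iota_*E')$ depends only on $\rk(E')$ and $H_{Y_d}^{n-2}\ch_1(E')$, so the $E'$-contribution is already pinned down by $\mu_{H_{Y_d}}(E')\le\mu$, while $\ch_2(E')$ on $Y_d$ (what BG controls) only enters $\ch_3(\iota_*E')$. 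Second, the upper bound $\mu_{H,\max}(K)\le 0$ (or even $\le\beta_1$) gives no upper bound on $H^{n-2}\ch_2(K)$: a torsion-free sheaf whose HN slopes are all very negative has large \emph{positive} $\ch_2$. In fact no combination of the constraints you list can close the argument: on $\bP^3$ with $d=2$, $\mu=0$, $\rk E=2$, $E'=E$, the numerical data $\ch_{\le2}(F)=(1,H,0)$ and $\ch_{\le2}(K)=(1,-3H,4H^2)$ satisfies every one of them --- $\overline{\Delta}_H(F)=1\ge0$, $\overline{\Delta}_H(Q)=\overline{\Delta}_H(K)=1\ge0$, $\mu_{H_{Y_d}}(E')\le\mu$, $\mu_{H,\max}(K)=-3\le\beta_1$, $\mu_{H,\min}(F)=1\ge\beta_2$ --- yet the corresponding wall has endpoints $\beta_{1,2}=-1\mp\sqrt3$, hence $\beta_2-\beta_1=2\sqrt3>2=d$.

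What rules out such configurations, and what is the actual engine of the proof the paper appeals to (that of \cite[Lemma 3.6]{kos20}), is a \emph{rank comparison} that never appears in your write-up: a torsion-free sheaf of rank $1$ cannot surject onto $\iota_*E'$ with $\rk E'=2$. In general, dividing $F$ by its torsion subsheaf (which injects into $\iota_*E'$, hence equals $\iota_*T'$ for some $T'\subset E'$), the quotient $F/T(F)$ is free of rank $\rk(F)$ at the generic point of $Y_d$, whose local ring is a DVR; Nakayama applied to the surjection $F/T(F)\twoheadrightarrow\iota_*(E'/T')$ forces $\rk(F)\ge\rk(E'/T')$. Combining this with the two-sided slope bounds coming from membership in $\Coh^\beta(X)$ along the \emph{whole} wall, namely $\mu_{H,\min}(F)\ge\beta_2$ and $\mu_{H,\max}(K)\le\beta_1$, and with additivity of $\ch_1$, one gets $(\beta_2-\beta_1)\rk(F)H^n\le H^{n-1}\ch_1(F/T(F))-H^{n-1}\ch_1(K)=\rk(E'/T')\,dH^n\le\rk(F)\,dH^n$, i.e.\ $\beta_2-\beta_1\le d$, with no discriminant or BG-type input whatsoever. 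The absence of this rank inequality (or any substitute for it) is a genuine gap, not a routine verification left to the reader.
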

\begin{proof}
The same proof as in \cite[Lemma 3.6]{kos20} works, 
where the author considers the case of $d=6$. 
\end{proof}

\section{BG inequality on the projective space} \label{sec:P3}
In this section, we investigate the stronger form of the BG inequality 
on the three dimensional projective space. 
For a hypersurface $S_d \subset \bP^3$ of degree $d \in \bZ_{>0}$, 
we denote by $H:=H_{\bP^3}|_{S_d}$ 
the restriction of the hyperplane on $\bP^3$ to the surface $S_d$. 
The following two lemmas are well-known: 

\begin{lem}[cf. \cite{rud94}] \label{lem:bgS2}
Let $S_2 \subset \bP^3$ be a smooth quadric hypersurface. 
Let $E \in \Coh(S_2)$ be a torsion free $\mu_H$-semistable sheaf 
with slope $\mu_H(E) \in [0, 1]$. 
Then the inequality 
\[
\frac{\ch_2(E)}{H^2\ch_0(E)} \leq \Gamma\left( \mu_H(E) \right)
\]
holds, where we define the function 
$\Gamma \colon [0, 1] \to \bR$ as follows: 
\[
\Gamma(t):=
\begin{cases}
-t/2 & (t \in [0, 1/2)) \\
0 & (t=1/2) \\
3t/2-1 & (t \in (1/2, 1]). 
\end{cases}
\]
\end{lem}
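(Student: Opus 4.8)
The plan is to reduce the whole statement to elementary cohomology vanishing on the quadric $S_2 \cong \bP^1\times\bP^1$, together with Serre duality; notably, the classical inequality $\overline{\Delta}_H\ge 0$ of Theorem \ref{thm:bgusual} is not needed. Write $r=\ch_0(E)>0$ and $c=H\cdot\ch_1(E)$, so that $\mu:=\mu_H(E)=c/(2r)$ because $H^2=2$. First I would make two harmless reductions. Since $\Gamma$ depends only on the slope and every Jordan--Hölder factor of a $\mu_H$-semistable sheaf has the same slope $\mu$, while $\ch_2(E)/(H^2\ch_0(E))$ is the weighted average (with positive weights $H^2\ch_0$) of the corresponding ratios of the factors, it suffices to treat $\mu_H$-stable $E$. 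Moreover, replacing $E$ by its reflexive hull $E^{\vee\vee}$ only increases $\ch_2$ (the quotient $E^{\vee\vee}/E$ has $0$-dimensional support) and changes neither $\ch_0$, $\ch_1$ nor stability, so I may assume $E$ locally free. Finally I record Riemann--Roch on $S_2$: since $K_{S_2}=-2H$ and $\chi(\mcO_{S_2})=1$, one has $\chi(F)=\ch_2(F)+H\cdot\ch_1(F)+\ch_0(F)$ for every sheaf $F$.

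The heart of the argument is the range $\mu\in[0,1/2)$. Let $f_1,f_2$ denote the two rulings, so that $f_i^2=0$, $f_1\cdot f_2=1$, $H=f_1+f_2$, and $\mu_H(\mcO(f_i))=1/2$. For $\mu<1/2$ a nonzero section of $E(-f_i)$ would produce an inclusion $\mcO(f_i)\hookrightarrow E$ of slope $1/2>\mu$, contradicting semistability, so $H^0(E(-f_i))=0$; dually $H^2(E(-f_i))=\Hom(E,\mcO(f_i-2H))^{\ast}=0$ since the target line bundle has negative slope. Hence $\chi(E(-f_i))\le 0$, and a short computation with $e^{-f_i}=1-f_i$ gives $\chi(E(-f_1))=\ch_2(E)+f_2\cdot\ch_1(E)$ and $\chi(E(-f_2))=\ch_2(E)+f_1\cdot\ch_1(E)$. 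Adding the two inequalities yields $2\ch_2(E)+H\cdot\ch_1(E)\le 0$, i.e. $\ch_2(E)/(H^2\ch_0(E))\le -\mu/2=\Gamma(\mu)$. At the boundary $\mu=1/2$ the section vanishing for $E(-f_i)$ fails, but there one uses $E(-H)$ instead: the same slope count gives $H^0(E(-H))=H^2(E(-H))=0$, while Riemann--Roch collapses to $\chi(E(-H))=\ch_2(E)$, whence $\ch_2(E)\le 0=\Gamma(1/2)$.

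For the range $\mu\in(1/2,1]$ I would exploit the duality $\mu\mapsto 1-\mu$. The sheaf $F:=E^{\vee}(H)$ is again locally free and $\mu_H$-stable, and a direct computation gives $\ch_2(F)=\ch_2(E)-H\cdot\ch_1(E)+r$ and $\mu_H(F)=1-\mu\in[0,1/2)$. Applying the first-range bound already established to $F$ gives $\ch_2(F)\le -r(1-\mu)$, and substituting the expression for $\ch_2(F)$ rearranges exactly to $\ch_2(E)/(H^2\ch_0(E))\le 3\mu/2-1=\Gamma(\mu)$; the endpoint $\mu=1$, with $\mcO_{S_2}(H)$ extremal, is the degenerate case $1-\mu=0$. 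The one genuinely clever point — and the only place where the conclusion is strictly stronger than $\overline{\Delta}_H\ge 0$ — is twisting by the individual rulings rather than by $H$ in the first range, which upgrades the trivial bound $\ch_2\le 0$ to the sharp $-\mu/2$. The main things to verify carefully are the preservation of stability under $(-)^{\vee}$, $\otimes\,\mcO(H)$ and reflexive hull, and the inequality $\ch_2(E)\le\ch_2(E^{\vee\vee})$ used to pass from torsion free to locally free sheaves.
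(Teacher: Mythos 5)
Your proof is correct and takes essentially the same route as the paper: vanishing of $\Hom(\mcO(h_i),E)$ and $\Ext^2(\mcO(h_i),E)$ via stability plus Serre duality, Riemann--Roch twisted by the two rulings, summing over $i=1,2$ to get $2\ch_2(E)+H\ch_1(E)\le 0$, and then applying this to $E^\vee(H)$ for the range $(1/2,1]$. The only (harmless) deviation is at $\mu=1/2$: the paper keeps the ruling argument but excludes the exceptional bundles $\mcO(h_i)$ (which have $\ch_2=0$), whereas you twist by $H$ and use $\chi(E(-H))=\ch_2(E)\le 0$, avoiding that case analysis.
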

\begin{proof}
For $i=1, 2$, let $h_i$ be divisors on $S_2 \cong \bP^1 \times \bP^1$ 
such that $\mcO_{S_2}(h_1)=\mcO_{S_2}(1, 0)$, 
$\mcO_{S_2}(h_2)=\mcO_{S_2}(0, 1)$. 
Note that we have 
$\mu_H(\mcO_{S_2}(h_i))=1/2$ and 
$\ch_2(\mcO_{S_2}(h_i))=0$. 

Let $E \in \Coh(S_2)$ be a slope stable vector bundle 
with $\mu_H(E) \in [0, 1/2]$, not isomorphic to 
$\mcO_{S_2}(h_1)$ nor $\mcO_{S_2}(h_2)$. 
By stability of $E$ and Serre duality, we have 
\[
\hom\left( \mcO_{S_2}(h_i), E \right)=0=\ext^2\left(\mcO_{S_2}(h_i), E \right). 
\]
Hence by the Riemann-Roch theorem, we have 
\begin{align*}
0 \geq -\ext^1\left(\mcO_{S_2}(h_i), E \right)
&=\chi\left(\mcO_{S_2}(h_i), E \right) \\
&=\int_{S_2}\ch(E).(1, H-h_i, 0) \\
&=\ch_2(E)+(H-h_i)\ch_1(E). 
\end{align*}
Summing up these inequalities for $i=1, 2$, we get 
\begin{equation} \label{eq:mu1/2}
2\ch_2(E) \leq -\left(2H-(h_1+h_2) \right)\ch_1(E)
=-H\ch_1(E)
\end{equation}
as required. 
When $\mu_H(E) \in [1/2, 1]$, 
we get the required inequality 
by applying the inequality (\ref{eq:mu1/2}) 
to the bundle $E^\vee(H)$. 
\end{proof}

\begin{lem} \label{lem:bgS4}
Let $S_4 \subset \bP^3$ be a smooth quartic hypersurface. 
Let $E \in \Coh(S_4)$ be a torsion free $\mu_H$-semistable sheaf 
with slope $\mu_H(E)=1/2$. 
Then the inequality 
\[
\frac{\ch_2(E)}{H^2\ch_0(E)} \leq 
-\frac{1}{8}. 
\]
holds. 
\end{lem}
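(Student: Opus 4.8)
The plan is to exploit that a smooth quartic $S_4\subset\bP^3$ is a K3 surface: its canonical bundle is trivial, $H^2=4$ and $\chi(\mcO_{S_4})=2$. First I would reduce to the case where $E$ is $\mu_H$-stable. Indeed, a Jordan--Hölder filtration of $E$ with respect to slope stability produces torsion-free stable factors $G_i$, all of slope $1/2$, and since $\ch_0$ and $\ch_2$ are additive on short exact sequences it suffices to prove $\ch_2(G_i)\le-\tfrac12\ch_0(G_i)$ for each factor and sum. Rewriting $\mu_H(E)=1/2$ as $H\ch_1(E)=2\ch_0(E)$, the desired inequality $\ch_2(E)/(H^2\ch_0(E))\le-1/8$ is exactly $\ch_2(E)\le-\tfrac12\ch_0(E)$.

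For $\mu_H$-stable $E$ with $\ch_0(E)\ge 2$ I would argue through the Mukai vector $v(E)=(\ch_0(E),\ch_1(E),\ch_0(E)+\ch_2(E))$ and the Mukai pairing, for which $\chi(E,E)=-v(E)^2$. As $E$ is stable it is simple, so $\hom(E,E)=1$, and Serre duality on the K3 surface gives $\ext^2(E,E)=1$; hence $\ext^1(E,E)=v(E)^2+2\ge 0$, i.e. $v(E)^2\ge-2$. Expanding, $v(E)^2=\ch_1(E)^2-2\ch_0(E)^2-2\ch_0(E)\ch_2(E)$. The Hodge index theorem applied to $\ch_1(E)$ and the ample class $H$ yields $\ch_1(E)^2\le(H\ch_1(E))^2/H^2=\ch_0(E)^2$, and combining this with $v(E)^2\ge-2$ gives $\ch_2(E)\le-\tfrac{\ch_0(E)}{2}+\tfrac{1}{\ch_0(E)}$. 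Since the Mukai vector lies in the integral lattice $H^*(S_4,\bZ)$, the number $\ch_2(E)$ is an integer, so for every $\ch_0(E)\ge 2$ the correction $1/\ch_0(E)<1/2$ is small enough that integrality upgrades this estimate to the sharp bound $\ch_2(E)\le-\tfrac12\ch_0(E)$; one checks the even and odd rank cases separately.

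The remaining case $\ch_0(E)=1$ is where the above only gives $\ch_2(E)\le 0$, and it is the genuinely delicate point. Here $E\cong L\otimes\mcI_Z$ for a line bundle $L=\mcO_{S_4}(D)$ with $H\cdot D=2$ and a zero-dimensional $Z$, so $\ch_2(E)=\tfrac12 D^2-\ell(Z)$. Because the K3 lattice is even and Hodge index forces $D^2\le(H\cdot D)^2/H^2=1$, we get $D^2\le 0$, hence $\ch_2(E)\le 0$ with equality only when $D^2=0$ and $Z=\varnothing$. The hard part is to exclude precisely this configuration: a class $D$ with $D^2=0$ and $H\cdot D=2$ is, up to sign, an elliptic pencil meeting $H$ in degree $2$, and by Saint-Donat's analysis of degree-$4$ polarizations such a pencil exists if and only if $|H|$ realizes $S_4$ as a double cover of a quadric rather than as an embedded quartic. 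Since $S_4\subset\bP^3$ is a genuinely embedded smooth quartic, no such $D$ exists, so $\ch_2(E)\le-1\le-\tfrac12$, finishing the rank-one case. I expect this verification — that very ampleness of the quartic polarization rules out a degree-$2$ genus-one pencil — to be the main obstacle, as it is the only step requiring input beyond lattice arithmetic and the simplicity/Serre-duality bound for stable sheaves.
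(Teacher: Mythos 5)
Your proof is correct, and it runs on the same engine as the paper's: stability gives $\hom(E,E)=1$, Serre duality on the K3 surface $S_4$ gives $\ext^2(E,E)=1$, and Riemann--Roch (the Mukai pairing) combined with integrality of $\ch_2(E)$ converts the resulting bound on $\chi(E,E)$ into $\ch_2(E)\le-\tfrac12\ch_0(E)$. The genuine difference is scope. The paper's proof opens with ``we may assume $\ch_1(E)=\ch_0(E)H/2$'': once $\ch_1$ is proportional to $H$, the rank is forced to be even, the rank-one case disappears, Hodge index is unnecessary, and everything reduces to the spherical/non-spherical dichotomy, the spherical case being killed by the non-integrality $\ch_2(E)=\tfrac{1}{2a}-a\notin\bZ$. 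That reduction is legitimate for the paper's purposes, because in the proof of Theorem \ref{thm:bgP3} the lemma is only applied to restrictions $E|_{S_4}$ of objects on $\bP^3$, whose $\ch_1$ is automatically an integer multiple of $H$ (alternatively, one may take $S_4$ very general, of Picard rank one); but it does not prove the lemma as literally stated, for an arbitrary semistable sheaf on an arbitrary smooth quartic, where $\ch_1$ need not be proportional to $H$. Your argument delivers exactly that stronger statement, and the two extra ingredients you invoke --- the Hodge index theorem for $\ch_0(E)\ge 2$, and in rank one the exclusion of a class $D$ with $D^2=0$, $H\cdot D=2$ --- are precisely the cost of dropping proportionality. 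Your Saint-Donat step, which you rightly flag as the delicate point, does go through, and can even be made elementary: such a $D$ is effective by Riemann--Roch on a K3, and an effective divisor of degree $2$ on $S_4\subset\bP^3$ is a conic, a pair of lines, or a double line, all of strictly negative self-intersection; this also repairs the slightly imprecise identification of $D$ itself with an irreducible elliptic pencil (strictly one should pass to the moving part of $|D|$ or argue as above). A further payoff of your route: it is characteristic-free and Picard-rank-free, which is relevant to Theorem \ref{thm:P3positive}, since over $\overline{\mathbb{F}}_p$ there are no quartics of Picard number one and the very-general-quartic justification of the paper's opening reduction is unavailable there.
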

\begin{proof}
We may assume that $E$ is a slope stable vector bundle 
with $\ch_1(E)=\ch_0(E)H/2$. 
Recall that we have $\ch(E) \in H^*_{alg}(X, \bZ)$, 
so we have $\ch_0(E)=2a$ for some positive integer $a \in \bZ_{>0}$ 
and $\ch_2(E) \in \bZ$. 
First we claim that the bundle $E$ is non-spherical. 
If otherwise, we have 
\[
2=\chi(E, E)
=2\ch_0(E)^2-\ch_0(E)^2+2\ch_0(E)\ch_2(E)
\]
and hence 
\[
\ch_2(E)=\frac{1}{\ch_0(E)}-\frac{\ch_0(E)}{2}
=\frac{1}{2a}-a 
\notin \bZ, 
\]
which is a contradiction. 
Now for a non-spherical stable bundle $E$, 
we have the inequality 
$0 \geq \chi(E, E)$, 
from which we deduce the required inequality. 
\end{proof}

Let us define a periodic function 
$\gamma \colon \bR \to \bR$ 
with $\gamma(t+1)=\gamma(t)$ as follows: 
\[
\gamma(t):=
\begin{cases}
\frac{1}{2}t^2+\frac{1}{4}t & (t \in [0, 1/2]), \\
\frac{1}{2}t^2-\frac{5}{4}t+\frac{3}{4} & (t \in [1/2, 1]). 
\end{cases}
\]
We then define a function $\Theta \colon \bR \to \bR$ as 
$\Theta(t):=t^2/2-\gamma(t)$. 

Now we are ready to prove 
the following stronger BG inequality on $\bP^3$:

\begin{thm} \label{thm:bgP3}
Let $E \in \Coh^0(\bP^3)$ be a $\nu_{0, \alpha}$-semistable object 
for some positive real number $\alpha >0$, 
with slope $\mu_H(E) \in [0, 1]$. 
Then the inequality 
\begin{equation} \label{eq:bgP3}
\frac{H\ch_2(E)}{H^3\ch_0(E)} \leq \Theta\left( \mu_H(E) \right)
\end{equation}
holds. 
\end{thm}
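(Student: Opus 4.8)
The plan is to deduce the inequality \eqref{eq:bgP3} by combining the restriction lemma (Lemma~\ref{lem:restriction}) with the surface-level bounds already established in Lemmas~\ref{lem:bgS2} and~\ref{lem:bgS4}, and then to promote the resulting sheaf-level statement to all $\nu_{0,\alpha}$-semistable objects via Proposition~\ref{prop:tiltbg}. Concretely, I would first take the target function $\Theta(t) = t^2/2 - \gamma(t)$ and verify that it has the structural properties required to feed into the machinery: namely that $\Theta$ is star-shaped along the appropriate vertical lines $\beta=0$ and $\beta=d$ for the relevant degrees $d$, and that it satisfies the sandwich bound $t^2 - \tfrac{d}{2}t \le \Theta(t) \le \tfrac12 t^2$ needed to invoke Lemma~\ref{lem:restriction}. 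Since $\gamma$ is the explicitly given piecewise-quadratic periodic function, these are finite checks on each of the two pieces of $[0,1/2]$ and $[1/2,1]$.

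Next I would argue by contradiction in the style of Proposition~\ref{prop:tiltbg}: it suffices to prove the inequality for $\mu_H$-semistable torsion free sheaves $E$ on $\bP^3$ with $\mu_H(E) \in [0,1]$, since the proposition then extends it to all $\nu_{0,\alpha}$-semistable objects. Suppose some such sheaf violates \eqref{eq:bgP3}. Among all counterexamples (those satisfying conditions (a) and (b) of Lemma~\ref{lem:restriction} with $f = \Theta$) I would select one $E$ of minimal discriminant $\overline{\Delta}_H$, exactly as in the proof of Lemma~\ref{lem:restriction}. That lemma then guarantees that, after this minimal choice, the restriction $E|_{Y_d}$ to an irreducible hypersurface $Y_d \in |dH|$ is $\mu_{H_{Y_d}}$-semistable. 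The point of the construction of $\Theta$ via the quadric and quartic is that I can restrict to $Y_2 = S_2$ (and, where the stronger bound at $\mu = 1/2$ is needed, to $Y_4 = S_4$) rather than only to $\bP^2$; this is precisely the remark in the introduction that one cuts by degree-one multiples and uses the quadric and quartic surfaces.

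Having restricted, I would apply Lemma~\ref{lem:bgS2} on $S_2$ and Lemma~\ref{lem:bgS4} on $S_4$ to bound $\ch_2(E|_{Y_d})/(H^2 \ch_0)$ by $\Gamma$ and by $-1/8$ respectively. The final arithmetic step is to translate these surface Chern-character bounds back to $\bP^3$ using the exact sequence $0 \to E(-dH) \to E \to E|_{Y_d} \to 0$ (equivalently $\ch(E|_{Y_d}) = \ch(E) - \ch(E(-dH))$), thereby expressing $H\ch_2(E)/(H^3\ch_0(E))$ in terms of $\mu_H(E)$ and the restricted quantity; comparing with the defining formula $\Theta = t^2/2 - \gamma$ should yield precisely $H\ch_2(E)/(H^3\ch_0(E)) \le \Theta(\mu_H(E))$, contradicting the assumed violation.

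The main obstacle I anticipate is organizational rather than conceptual: getting the bookkeeping right so that the piecewise function $\Gamma$ (and the single sharp value $-1/8$ from $S_4$) assembles into exactly the piecewise function $\Theta$ after the degree-shift computation, and in particular verifying that the optimal choice of restriction degree $d$ genuinely gives the stated bounds on each subinterval of $[0,1]$. The delicate point is the behavior near $\mu_H(E) = 1/2$, where $\Theta$ has its corner and where Lemma~\ref{lem:bgS4} supplies the improved bound $-1/8$ on the quartic; I expect that reproducing the sharpness of $\Theta$ at and around $t=1/2$ requires carefully matching the quadric bound $\Gamma$ against the quartic bound, and confirming that the star-shapedness hypotheses hold at this junction so that the minimal-discriminant object cannot be destabilized along either vertical wall.
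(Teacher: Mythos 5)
Your proposal follows essentially the same route as the paper's proof: argue by contradiction, check that $\Theta$ is star-shaped along $\beta=0,2,4$ and satisfies the sandwich bounds, invoke Lemma~\ref{lem:restriction} (via the minimal-discriminant choice) to get a counterexample whose restrictions to a general quadric and a general quartic are slope semistable, and then contradict Lemma~\ref{lem:bgS2} when $\mu_H(E)\neq 1/2$ and Lemma~\ref{lem:bgS4} when $\mu_H(E)=1/2$. That is exactly the paper's argument. However, two points in your write-up would cause trouble if executed literally.

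First, the detour through Proposition~\ref{prop:tiltbg} is both unnecessary and, as stated, incomplete. The theorem is already a statement about $\nu_{0,\alpha}$-semistable objects, and Lemma~\ref{lem:restriction} applies directly to a tilt-semistable counterexample, so there is nothing to ``promote'': your core argument proves the tilt-level statement without any reduction. If you do route the argument through Proposition~\ref{prop:tiltbg}, its hypothesis requires the inequality for \emph{all} objects of $\mcD$, which includes the two-term complexes $E$ with $\mcH^{-1}(E)$ slope semistable and $\dim\mcH^0(E)\leq 1$; you never address this class (it does reduce to the sheaf case, since $H\ch_2(\mcH^0(E))\geq 0$ and $\ch_0(E)<0$, but the reduction must be stated). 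Second, your ``final arithmetic step'' uses the wrong identity: $\ch(E)-\ch(E(-dH))$ is the Chern character of the \emph{pushforward} $\iota_*(E|_{Y_d})$ on $\bP^3$, whose degree-two part is $dH\ch_1(E)-\tfrac{d^2}{2}H^2\ch_0(E)$ and does not contain the surface-intrinsic $\ch_2(E|_{Y_d})$ needed in Lemmas~\ref{lem:bgS2} and~\ref{lem:bgS4}; recovering it from pushforward data requires the Grothendieck--Riemann--Roch correction as in Lemma~\ref{lem:chern}. The correct, and much simpler, translation is via pullback: $\ch(E|_{Y_d})=\iota^*\ch(E)$, so $\mu_{H_{Y_d}}(E|_{Y_d})=\mu_H(E)$ and $\ch_2(E|_{Y_d})/\bigl(H_{Y_d}^2\ch_0(E|_{Y_d})\bigr)=H\ch_2(E)/\bigl(H^3\ch_0(E)\bigr)$, with no degree shift at all. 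The only remaining check is then about the functions themselves: $\Gamma(t)\leq\Theta(t)$ for $t\in[0,1]$, $t\neq 1/2$, while $\Gamma(1/2)=0>\Theta(1/2)=-1/8$, which is precisely why the quartic bound of Lemma~\ref{lem:bgS4} must replace the quadric bound at $t=1/2$.
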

\begin{proof}
Assume for a contradiction that 
there exists a $\nu_{0, \alpha}$-semistable object $E$ 
with $\mu_H(E) \in [0, 1]$ violating the inequality (\ref{eq:bgP3}). 
Note that the function 
$\Theta \colon [0, 1] \to \bR$ 
is star-shaped along the lines $\beta=0, 2, 4$, 
and satisfies the inequality 
$t^2-dt/2 \leq \Theta(t)$ 
for all $t \in [0, 1]$ and $d \geq 2$. 
Hence by Lemma \ref{lem:restriction}, 
we may assume that the restrictions $E|_{S_d}$ are 
slope semistable vector bundle on general hypersurfaces 
$S_d \subset \bP^3$ of degree $d=2, 4$. 
Note that we have $\mu_H(E|_{S_d})=\mu_H(E)$. 
When $\mu_H(E) \neq 1/2$ (resp. $\mu_H(E)=1/2$), 
we get a contradiction by Lemma \ref{lem:bgS2} 
(resp. Lemma \ref{lem:bgS4}). 
\end{proof}

\begin{cor} \label{cor:bgP3}
Every  torsion free slope semistable sheaf 
$E \in \Coh(\bP^3)$ satisfies the inequality 
\[
\frac{H\ch_2(E)}{H^3\ch_0(E)} \leq 
\Theta\left( \mu_H(E) \right). 
\]

In particular, we have a continuous family of tilt-stability 
parametrized by pairs $(\beta, \alpha)$ of real numbers satisfying 
$\alpha > \Theta(\beta)$. 
\end{cor}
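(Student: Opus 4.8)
The plan is to reduce the first assertion to Theorem \ref{thm:bgP3} by a twisting argument, and then to deduce the enlarged family of tilt-stability conditions directly from the resulting bound. The crucial observation is that the defect function
\[
g(E):=\frac{H\ch_2(E)}{H^3\ch_0(E)}-\Theta\bigl(\mu_H(E)\bigr)
\]
is invariant under the autoequivalence $E\mapsto E(H)$. Indeed, from $\ch(E(H))=\ch(E).e^{H}$ one computes $\mu_H(E(H))=\mu_H(E)+1$ together with
\[
\frac{H\ch_2(E(H))}{H^3\ch_0(E(H))}=\frac{H\ch_2(E)}{H^3\ch_0(E)}+\mu_H(E)+\tfrac{1}{2},\qquad
\Theta(t+1)=\Theta(t)+t+\tfrac{1}{2},
\]
where the functional equation for $\Theta$ is immediate from $\Theta(t)=t^2/2-\gamma(t)$ and the periodicity $\gamma(t+1)=\gamma(t)$. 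Combining the two identities gives $g(E(H))=g(E)$.

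With this in hand the reduction is routine. Tensoring by a line bundle preserves torsion-freeness and $\mu_H$-semistability and shifts the slope by an integer, so for any torsion free $\mu_H$-semistable sheaf $E$ there is a unique integer $k$ with $\mu_H(E(kH))\in(0,1]$; by the invariance of $g$ it suffices to prove the inequality after replacing $E$ by $E(kH)$, i.e.\ we may assume $\mu_H(E)\in(0,1]$. For such a sheaf one has $E\in\mcT_0\subset\Coh^0(\bP^3)$ (here we use $\mu_H(E)>0$ strictly, which is why we normalize into the half-open interval $(0,1]$ rather than $[0,1]$), and by the large-volume behaviour of tilt-stability \cite{bms16, li19b} the sheaf $E$ is $\nu_{0,\alpha}$-semistable for all $\alpha\gg0$. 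Theorem \ref{thm:bgP3} then applies and yields $g(E)\le0$, which is the first assertion.

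For the ``in particular'' clause, I would revisit the standard construction of the tilt-stability condition $(\Coh^\beta(\bP^3),\nu_{\beta,\alpha})$ and locate exactly where the hypothesis $\alpha>\beta^2/2$ is used. Objects of $\Coh^\beta$ of the form $F[1]$ with $F$ a $\mu_H$-semistable sheaf of slope $\mu_H(F)=\beta$ are precisely those whose imaginary central charge $H^{n-1}\ch_1-\beta H^n\ch_0$ vanishes, and the only positivity condition the construction requires of them is $H\ch_2(F)/\ch_0(F)<\alpha$. The classical Bogomolov--Gieseker inequality (Theorem \ref{thm:bgusual}) supplies the bound $\beta^2/2$ here, giving the usual region; the first part of the corollary supplies the strictly stronger bound $\Theta(\beta)\le\beta^2/2$ (the last inequality because $\gamma\ge0$, as one checks on $[0,1]$ and extends by periodicity). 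Replacing $\beta^2/2$ by $\Theta(\beta)$ verbatim in the construction shows that $\nu_{\beta,\alpha}$ defines a weak stability condition for every $(\beta,\alpha)$ with $\alpha>\Theta(\beta)$, and the usual arguments (Harder--Narasimhan property, support property, local finiteness of walls as in \cite{bms16}) then give a continuous family over this enlarged region.

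The main obstacle is Step four: making precise that the \emph{only} modification to the standard construction is the replacement of the boundary bound $\beta^2/2$ by $\Theta(\beta)$, and checking that the improved bound is needed at the slope $\mu_H=\beta$ exactly (objects with $\mu_H(F)<\beta$ already have strictly positive imaginary central charge and impose no extra constraint). The remaining technical points — the boundary case $\mu_H(E)=0$ in the reduction, handled by normalizing into $(0,1]$, and the strictness of the inequality $\alpha>\Theta(\beta)$ — are straightforward once the invariance of $g$ is established.
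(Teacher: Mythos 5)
Your proposal follows the same route as the paper: the paper's entire proof is the functional equation $\Theta(t+1)=\Theta(t)+t+1/2$ (your twist-invariance of the defect $g$) followed by an application of Theorem \ref{thm:bgP3} via the large-volume limit, and your normalization into $(0,1]$ and your sketch of the ``in particular'' clause only make explicit what the paper leaves implicit (the paper gives no argument at all for that clause, and your identification of where $\alpha>\beta^2/2$ enters --- positivity on the slope-$\beta$ objects $F[1]$ --- is the intended reading).

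There is, however, one step that is false as literally written: the claim that a $\mu_H$-\emph{semistable} sheaf $E$ with $\mu_H(E)\in(0,1]$ is $\nu_{0,\alpha}$-semistable for $\alpha\gg0$. This is true for $\mu_H$-\emph{stable} sheaves, but fails for strictly semistable ones. Indeed, if $E=F_1\oplus F_2$ with both $F_i$ slope stable of the same slope $\mu>0$ but with
$\frac{H\ch_2(F_1)}{H^3\ch_0(F_1)}>\frac{H\ch_2(F_2)}{H^3\ch_0(F_2)}$,
then $F_1\subset E$ is an inclusion in $\Coh^0(\bP^3)$ and
\[
\nu_{0,\alpha}(F_1)-\nu_{0,\alpha}(E)
=\frac{1}{\mu}\left(\frac{H\ch_2(F_1)}{H^3\ch_0(F_1)}-\frac{H\ch_2(E)}{H^3\ch_0(E)}\right)>0,
\]
the $\alpha$-terms cancelling because all objects involved have the same slope; so $F_1$ destabilizes $E$ at \emph{every} point $(0,\alpha)$, not just for small $\alpha$ (a concrete instance on $\bP^3$ is $N(1)\oplus\mcO(1)$ with $N$ a null-correlation bundle). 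The repair is short and standard: a torsion free slope semistable sheaf admits a filtration whose quotients are torsion free, slope \emph{stable}, all of slope $\mu_H(E)$ (saturate the subsheaves of a Jordan--H\"older-type filtration), and since $\ch_{\leq 2}$ is additive, the ratio $H\ch_2(E)/H^3\ch_0(E)$ is a weighted average of the corresponding ratios of the stable factors; so it suffices to prove the inequality for stable sheaves, to which the large-volume limit genuinely applies. Note that the paper's own proof (``which directly follows from Theorem \ref{thm:bgP3}'') glosses over exactly the same point, so with this one-line amendment your argument and the paper's coincide.
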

\begin{proof}
By definition, the function $\Theta$ satisfies 
$\Theta(t+1)=\Theta(t)+t+1/2$ for all $t \in \bR$. 
Hence it is enough to prove 
the assertion for semistable sheaves $E$ 
with slope $\mu_H(E) \in [0, 1]$, 
which directly follows from Theorem \ref{thm:bgP3}. 
\end{proof}

\section{BG inequality on hypersurfaces} \label{sec:hyper}
The goal of this section is to prove the following theorem: 

\begin{thm} \label{thm:bggeneral}
Let $S^n_d \subset \bP^{n+1}$ be a smooth hypersurface 
of degree $d \geq 1$, dimension $n \geq 2$. 
Let $E$ be a $\nu_{0, \alpha}$-semistable object 
for some $\alpha >0$, 
with slope $\mu_H(E) \in [0, 1]$. 
Then we have the inequality 
\begin{equation} \label{eq:xi}
\frac{H^{n-2}\ch_2(E)}{H^n\ch_0(E)} \leq 
\Xi(\mu_H(E)), 
\end{equation}
where we define the function $\Xi \colon [0, 1] \to \bR$ as 
\[
\Xi(t):=
\begin{cases}
\frac{1}{3}t^2-\frac{1}{12}t & (t \in [0, 1/2]) \\
\frac{1}{3}t^2+\frac{5}{12}t-\frac{1}{4 } & (t \in [1/2, 1]). 
\end{cases}
\]
\end{thm}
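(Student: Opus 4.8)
The plan is to reduce everything to the surface $S^2_d$ and there transfer the question to $\bP^3$, where the generalized inequality $BG(\beta,\alpha)$ of Theorem \ref{thm:bg3P3}, combined with the strong inequality of Theorem \ref{thm:bgP3}, can be brought to bear. For the reduction I would first verify that $\Xi$ satisfies the hypotheses of the restriction machinery: $\Xi(0)=0$, $\Xi(1)=1/2$, the pinching $t^2-\tfrac12 t\le\Xi(t)\le\tfrac12 t^2$ on $[0,1]$, and star-shapedness along the lines $\beta=0$ and $\beta=1$. Granting this, Proposition \ref{prop:tiltbg} reduces the statement to $\mu_H$-semistable torsion-free sheaves, and Lemma \ref{lem:restriction} applied with $d=1$ (cutting by a hyperplane $Y_1\in|H|$, which is itself a smooth $S^{n-1}_d$ of the same degree) turns any hypothetical counterexample into one whose restriction to $Y_1$ is again slope-semistable. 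Since $\mu_H$ and $H^{n-2}\ch_2/H^n\ch_0$ are preserved under restriction to $|H|$, this restriction still violates (\ref{eq:xi}); iterating $n-2$ times leaves only the case $n=2$. I expect this step to be routine.

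For a $\mu_H$-semistable torsion-free sheaf $E$ on $S=S^2_d$, I would fix the embedding $\iota\colon S\hookrightarrow\bP^3$ and study $\iota_*E$. Grothendieck--Riemann--Roch with normal bundle $\mcO_S(dH)$ yields $\ch_0(\iota_*E)=0$, $H^2\ch_1(\iota_*E)=d\ch_0(E)$, $\overline{\Delta}_H(\iota_*E)=d^2\ch_0(E)^2$, together with explicit formulas for $H\ch_2(\iota_*E)$ and $\ch_3(\iota_*E)$ in terms of $\mu:=\mu_H(E)$, $\ch_0(E)$ and $\ch_2(E)$; moreover $\iota_*E$ is $\nu_{\beta,\alpha}$-semistable for $\alpha\gg0$. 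The decisive algebraic observation is that, upon feeding these data into $BG(\beta,\alpha)$, the terms quadratic in $\beta$ cancel and the dependence on $d^2$ disappears, so that the inequality collapses to the linear estimate
\[
\frac{H^{n-2}\ch_2(E)}{H^n\ch_0(E)}\le \frac13\Bigl(\alpha+\bigl(\tfrac d2-\mu\bigr)\beta\Bigr)+\frac{2\mu^2}{3}-\frac{d\mu}{6},
\]
valid at every $(\beta,\alpha)$ for which $\iota_*E$ is tilt-semistable. Thus the task reduces to minimizing the linear functional $\alpha+(\tfrac d2-\mu)\beta$ over the tilt-semistable locus of $\iota_*E$.

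I would then argue that this minimum is attained at the boundary point $(\beta,\alpha)=(\mu,\Theta(\mu))$ of the enlarged stability region $\alpha>\Theta(\beta)$ furnished by Corollary \ref{cor:bgP3}; substituting it collapses the right-hand side to $\tfrac13\bigl(\mu^2+\Theta(\mu)\bigr)$, which one checks equals $\Xi(\mu)$ on both $[0,1/2]$ and $[1/2,1]$, so that the two branches of $\Xi$ arise directly from the two branches of $\Theta$ and no separate duality argument is needed. The main obstacle is to justify that $\iota_*E$ really stays tilt-semistable down to $(\mu,\Theta(\mu))$: minimizing naively over the whole region $\alpha>\Theta(\beta)$ would drive $\beta$ to large negative values and produce a false bound (already for $E=\mcO_S$ with $d$ large it would falsely force $\ch_2(E)<0$), so destabilizing walls must intervene before the boundary is reached. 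I would control these walls by combining Lemma \ref{lem:firstwall}, which caps their $\beta$-width by $d$, with Corollary \ref{cor:bgP3}, which constrains the Chern character of any tilt-semistable destabilizing subobject, so as to show that no wall crosses the vertical line $\beta=\mu$ above $\alpha=\Theta(\mu)$. Carrying out this wall analysis uniformly in $d$ is the heart of the argument.
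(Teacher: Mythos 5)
Your overall strategy is the paper's own, step for step: reduce to slope-semistable sheaves on the surface $S^2_d$ via star-shapedness of $\Xi$, Proposition \ref{prop:tiltbg}, Lemma \ref{lem:restriction} with hyperplane sections and induction (this is the paper's Lemma \ref{lem:cut}); push forward by $\iota\colon S^2_d\hookrightarrow\bP^3$ and compute $\ch(\iota_*E)$ by Grothendieck--Riemann--Roch (Lemma \ref{lem:chern}); and observe that Theorem \ref{thm:bg3P3} applied to $\iota_*E$ collapses to a bound linear in $(\beta,\alpha)$ -- your linear inequality is correct (the $\beta^2$- and $d^2$-terms do cancel), and it is exactly the computation in the paper's Proposition \ref{prop:bg2dim}. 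Your one genuine variation is nice: evaluating the linear functional at $(\mu,\Theta(\mu))$ gives $\tfrac13\bigl(\mu^2+\Theta(\mu)\bigr)=\Xi(\mu)$ on both branches, which bypasses the duality step $E\mapsto E^\vee(H)$ that the paper uses to reduce to $\mu\in[0,1/2]$; this is legitimate, provided the wall analysis is carried out for all $\mu\in[0,1]$ (the paper's Lemma \ref{lem:wall} is stated for $\mu\le 1/2$, but its argument does not use that restriction).

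The gap is in the step you yourself call the heart of the argument: it is left as a plan, and the plan as stated aims at a statement your named tools cannot prove. You propose to show that no wall crosses the vertical line $\beta=\mu$ above $\alpha=\Theta(\mu)$; but Lemma \ref{lem:firstwall} only controls walls whose endpoints satisfy $\beta_1<0<\beta_2$, i.e.\ walls crossing $\beta=0$, and in any case Theorem \ref{thm:bg3P3} is stated only on $\alpha>\beta^2/2$, a region which does not contain $(\mu,\Theta(\mu))$ since $\Theta(\mu)<\mu^2/2$ for $\mu\in(0,1)$, so you could not invoke $BG$ near that point even granting semistability there. Both defects are repaired by your own observation, pushed one step further: the functional $\alpha+(\tfrac{d}{2}-\mu)\beta$ is constant along lines of slope $\mu-\tfrac{d}{2}$, and this is precisely the slope of \emph{every} wall for $\iota_*E$ (since $\ch_0(\iota_*E)=0$, walls have fixed slope $H\ch_2(\iota_*E)/H^2\ch_1(\iota_*E)=\mu-\tfrac{d}{2}$). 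Hence it suffices to prove $\nu_{0,\alpha_\mu}$-semistability at the single point $(0,\alpha_\mu)$ with $\alpha_\mu=\Theta(\mu)+\mu(\tfrac{d}{2}-\mu)$, which lies on your critical line and, for $d\geq 2$, inside the region where Theorem \ref{thm:bg3P3} applies; and for semistability along $\beta=0$ only walls crossing $\beta=0$ matter, so Lemma \ref{lem:firstwall} now does apply. This is exactly the paper's Lemma \ref{lem:wall}, whose missing calculation is short: such a wall has slope $\mu-\tfrac{d}{2}$, endpoints $(\beta_i,\Theta(\beta_i))$ on the graph of $\Theta$ by Corollary \ref{cor:bgP3}, and width $\beta_2-\beta_1\leq d$ by Lemma \ref{lem:firstwall}; since $\Theta(t)-t^2/2$ is $1$-periodic and $d$ is an integer, the chord of the graph of $\Theta$ of horizontal width $d$ ending at $\beta_2$ has slope exactly $\beta_2-\tfrac{d}{2}$, and comparing this with the slope of the wall forces $\beta_2\leq\mu$, i.e.\ every such wall lies below the critical line. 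Without this (or an equivalent) computation, the persistence of semistability "down to $(\mu,\Theta(\mu))$" -- and with it the whole bound -- remains unproven.
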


First we reduce the problem to the case of surfaces: 
\begin{lem} \label{lem:cut}
Assume that every slope stable sheaf $E$ on surfaces 
$S^2_d$ with slope $\mu_H(E) \in [0, 1/2]$ 
satisfies the inequality (\ref{eq:xi}). 
Then Theorem \ref{thm:bggeneral} holds. 
\end{lem}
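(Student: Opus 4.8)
The plan is to reduce the higher-dimensional statement to the surface case by repeatedly cutting with hyperplanes of degree one, using the restriction lemma for tilt-stability. The statement to prove is Lemma \ref{lem:cut}, which asserts that if the inequality (\ref{eq:xi}) holds for slope stable sheaves on the surfaces $S^2_d$ with slope in $[0, 1/2]$, then the full Theorem \ref{thm:bggeneral} follows.

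\medskip

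\emph{The reduction to coherent sheaves.} First I would apply Proposition \ref{prop:tiltbg} to pass from the tilt-semistable objects $E$ appearing in Theorem \ref{thm:bggeneral} to ordinary $\mu_H$-semistable coherent sheaves. One checks that $\Xi \colon [0,1] \to \bR$ is star-shaped along the line $\beta = 0$; granting this, Proposition \ref{prop:tiltbg} says it suffices to verify the inequality (\ref{eq:xi}) for every object in the class $\mcD$ on $S^n_d$, and in particular for every $\mu_H$-semistable torsion free sheaf with $\ch_0 > 0$. Since both sides of (\ref{eq:xi}) are unchanged on replacing $E$ by its double dual or a Jordan--H\"older factor of the same slope, I may further restrict attention to slope stable sheaves. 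Finally, the symmetry $E \mapsto E^\vee(H)$ exchanges slopes $t \leftrightarrow 1-t$ while carrying $\Xi$ to itself (this uses $\Xi(t) = \Xi(1-t) + (\text{linear in } t)$, matching the reflection of the Chern data), so it is enough to treat $\mu_H(E) \in [0, 1/2]$.

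\medskip

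\emph{Cutting down the dimension.} For the inductive step I would reduce $n$ to $n-1$ by restricting to a hyperplane section $Y_1 \in |H|$, i.e.\ a degree-one cut giving $S^{n-1}_d$. To do this via Lemma \ref{lem:restriction} with $d = 1$, I need a star-shaped function $f$ with $f(0)=0$, $f(1)=1/2$, lying between $t^2 - t/2$ and $t^2/2$, such that a slope stable sheaf violating (\ref{eq:xi}) produces a tilt-semistable object violating the $f$-bound. The natural choice is $f = \Xi$ itself (after checking $\Xi$ is star-shaped along $\beta=0,1$ and satisfies $t^2 - t/2 \le \Xi(t) \le t^2/2$ on $[0,1]$). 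A sheaf $E$ on $S^n_d$ violating (\ref{eq:xi}) gives, by Lemma \ref{lem:restriction}, an object whose restriction $E|_{Y_1}$ to $S^{n-1}_d$ is $\mu_{H}$-semistable with the same slope and the same normalized $\ch_2$ (because cutting by $H$ preserves $H^{n-2}\ch_2 / H^n \ch_0$), hence also violating (\ref{eq:xi}) one dimension down. Iterating brings us to the surface $S^2_d$, contradicting the hypothesis.

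\medskip

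\emph{Main obstacle.} The delicate point is verifying that the restriction lemma genuinely applies with $d=1$ and that the restricted sheaf again violates (\ref{eq:xi}) with the \emph{same} function $\Xi$ and \emph{same} slope. Concretely, I must confirm that the numerical invariants $\mu_H$ and $H^{n-2}\ch_2/(H^n\ch_0)$ are preserved under $E \mapsto E|_{Y_1}$ — this is a straightforward projection-formula computation since $[Y_1] = H$ — and that $\Xi$ satisfies all the hypotheses of Lemma \ref{lem:restriction} (star-shapedness along $\beta=0$ and $\beta=1$, and the sandwiching inequality), which is an elementary but necessary check of the piecewise-quadratic formula. Once these are in hand the induction is immediate, so the entire weight of the argument rests on the geometric input of Lemma \ref{lem:restriction} together with these bookkeeping verifications on $\Xi$.
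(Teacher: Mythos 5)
Your proposal is correct and takes essentially the same route as the paper: the paper likewise verifies that $\Xi$ is star-shaped along $\beta=0,1$ and satisfies $t^2-t/2 \leq \Xi(t)$, applies Lemma \ref{lem:restriction} with degree-one cuts and induction on $n$ to reduce to the surface $S^2_d$, extends from slope $[0,1/2]$ to $[0,1]$ via $E \mapsto E^\vee(H)$, and uses Proposition \ref{prop:tiltbg} to pass between slope-semistable sheaves and tilt-semistable objects. The only difference is cosmetic ordering: the paper runs the induction first at the level of tilt-semistable objects and invokes Proposition \ref{prop:tiltbg} only in the base case $n=2$, whereas you invoke it at the top dimension and then induct on the sheaf-level statement.
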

\begin{proof}
First note that the function $\Xi$ is star-shaped along the lines $\beta=0, 1$, 
and satisfies $t^2-t/2 \leq \Xi(t)$. 
Hence by Lemma \ref{lem:restriction} 
and induction on $n \geq 2$, 
it is enough to prove the assertion for $n=2$. 

As we assume that the inequality (\ref{eq:xi}) holds 
for every slope semistable sheaf $E \in \Coh(S^2_d)$ 
with $\mu_H(E) \in [0, 1/2]$, it also holds 
when $\mu_H(E) \in [1/2, 1]$, 
by applying the assumed inequality to $E^\vee(H)$. 
Now the same inequality (\ref{eq:xi}) holds 
for every $\nu_{0, \alpha}$-semistable objects 
by Proposition \ref{prop:tiltbg}. 
\end{proof}

In the following, we fix a smooth hypersurface 
$S^2_d \subset \bP^3$ of degree $d \geq 1$, 
and denote by 
$\iota \colon S^2_d \hookrightarrow \bP^3$ 
the embedding.

\begin{lem} \label{lem:chern}
Let $E \in D^b(S^2_d)$ be an object and put 
$r:=\ch_0(E), 
a:=H\ch_1(E)/d, 
b:=\ch_2(E)$. 
Then we have 
\begin{equation} \label{eq:ch*}
\ch(\iota_*E)=\left( 
0, drH, \left(a-\frac{d}{2}r \right)dH^2, 
b-\frac{d^2}{2}a+\frac{d^3}{6}r
	\right). 
\end{equation}
\end{lem}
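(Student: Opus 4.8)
The plan is to apply the Grothendieck--Riemann--Roch theorem to the closed immersion $\iota\colon S^2_d \hookrightarrow \bP^3$. Since the Chern character is additive on distinguished triangles and GRR holds for coherent sheaves, the identity
\[
\ch(\iota_* E)=\iota_*\!\left(\ch(E)\cdot \td(N)^{-1}\right)
\]
holds for every $E \in D^b(S^2_d)$, where $N=N_{S^2_d/\bP^3}$ is the normal bundle. This follows from the usual form $\ch(\iota_*E)\,\td(\bP^3)=\iota_*\bigl(\ch(E)\,\td(S^2_d)\bigr)$ combined with the normal bundle sequence $0 \to T_{S^2_d} \to \iota^*T_{\bP^3} \to N \to 0$, multiplicativity of the Todd class, and the projection formula. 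As $S^2_d \in |dH|$, we have $N=\mcO_{S^2_d}(dH)$, so $c_1(N)=dH$.

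First I would compute the inverse Todd class of $N$. From $\td(N)=dH/(1-e^{-dH})$ one obtains $\td(N)^{-1}=(1-e^{-dH})/(dH)$, and expanding while using that $H^3=0$ on the surface $S^2_d$ gives
\[
\td(N)^{-1}=1-\frac{d}{2}H+\frac{d^2}{6}H^2.
\]
Writing $\ch(E)=r+\ch_1(E)+b$ and multiplying, I would collect the graded pieces up to degree two, substituting $H\ch_1(E)=da$ and $H^2=d$ on $S^2_d$. The degree-zero, degree-one, and degree-two components are then
\[
r,\qquad \ch_1(E)-\frac{d}{2}rH,\qquad b-\frac{d^2}{2}a+\frac{d^3}{6}r.
\]

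Finally I would push forward via $\iota_*$, which increases the codimension (equivalently the index $i$ of $\ch_i$) by one, so it carries the degree-$i$ component on $S^2_d$ to the $\ch_{i+1}$ component on $\bP^3$. Using $\iota_*1=dH$ since $[S^2_d]=dH$, the projection formula $\iota_*\ch_1(E)=(H\ch_1(E))H^2=daH^2$, $\iota_*H=dH^2$, and $\iota_*[\mathrm{pt}]=H^3$, I recover exactly
\[
\ch(\iota_*E)=\Bigl(0,\;drH,\;\bigl(a-\tfrac{d}{2}r\bigr)dH^2,\;b-\tfrac{d^2}{2}a+\tfrac{d^3}{6}r\Bigr).
\]
The only genuinely delicate point is the bookkeeping across the pushforward: one must track the degree shift correctly and keep in mind that, although $H^2$ evaluates to $d$ and $H^3$ vanishes on the surface, the class $\ch_1(E)$ and the top class on $S^2_d$ become $H^2$- and $H^3$-multiples on $\bP^3$ after applying $\iota_*$ through the projection formula. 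Everything else is a routine expansion.
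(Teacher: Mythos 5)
Your proposal is correct and follows essentially the same route as the paper: both apply Grothendieck--Riemann--Roch to the embedding $\iota \colon S^2_d \hookrightarrow \bP^3$ and then carry out the (correct) bookkeeping of pushforwards and degree shifts. The only cosmetic difference is that you package GRR in the normal-bundle form $\ch(\iota_*E)=\iota_*\bigl(\ch(E)\cdot\td(N)^{-1}\bigr)$, needing only $\td(\mcO_{S^2_d}(dH))^{-1}$, whereas the paper writes $\iota_*\bigl(\ch(E)\,\td_{S^2_d}\bigr)=\ch(\iota_*E)\,\td_{\bP^3}$ with both Todd classes computed explicitly; the two are equivalent by the normal bundle sequence and the projection formula, as you note.
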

\begin{proof}
Using Grothendieck-Riemann-Roch theorem for embeddings, 
we have 
\[
\iota_*\left((r, \ch_1(E), b).\td_{S^2_d} \right)
=\ch(\iota_*E)\td_{\bP^3}. 
\]
Combining with the facts 
\begin{equation} \label{eq:tdhyper}
\td_{S^2_d}=\left(1, \left(2-\frac{d}{2} \right)H_S, \frac{d^3}{6}-d^2+\frac{11}{6}d \right), \quad 
\td_{\bP^3}=\left(1, 2H, \frac{11}{6}H^2, 1 \right), 
\end{equation}
the straightforward computation yields the result. 
\end{proof}

\begin{lem} \label{lem:wall}
Let $E \in \Coh(S^2_d)$ be a slope semistable vector bundle on $S^2_d$ 
with slope $\mu:=\mu_H(E) \in [0, 1/2]$. 
Then the sheaf $\iota_*E \in \Coh^0(\bP^3)$ is 
$\nu_{0, \alpha}$-semistable for all positive 
$\alpha \geq \alpha_\mu$, 
where the real number $\alpha_\mu$ is defined as follows: 
\begin{equation} \label{eq:alpha}
\alpha_\mu:=
-\mu^2+\frac{2d-1}{4}\mu. 
\end{equation}
\end{lem}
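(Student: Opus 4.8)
The plan is a wall-crossing analysis of $\iota_*E$ along the line $\beta = 0$, bounding its first wall by $\alpha_\mu$. I would begin by recording the two structural facts that drive everything: first, $\iota_*E$ is $\nu_{0,\alpha}$-semistable for $\alpha \gg 0$ (the large-volume behaviour of the pushforward of a slope-semistable sheaf from a divisor); second, by Lemma \ref{lem:chern} and $\ch_0(\iota_*E) = 0$, one has $\nu_{0,\alpha}(\iota_*E) = \mu - d/2$ for every $\alpha$, so that every wall for $\iota_*E$ is a line segment of the fixed slope $m := \mu - d/2$ (case (2) of the wall classification). It then suffices to prove that no wall meets $\beta = 0$ strictly above height $\alpha_\mu$.

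Suppose there were a wall at height $\alpha_0 > \alpha_\mu$, with destabilizing sequence $0 \to F \to \iota_*E \to G \to 0$ in $\Coh^0(\bP^3)$ and $F, G$ tilt-semistable on the wall. The next step is to pin down the shape of $F$ and $G$ using that $\iota_*E$ is a torsion sheaf: taking cohomology with respect to the standard t-structure forces $\mcH^{-1}(F) = 0$, so $F$ is a sheaf fitting into $0 \to K \to F \to \iota_*E'' \to 0$ with $K \in \mcF_0$ (a sheaf of slope $\le 0$, carrying all the rank of $F$) and $E'' \subseteq E$; dually $G = [\,K \to \iota_*(E/E'')\,]$. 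Slope-semistability of $E$ then gives $\mu_H(E'') \le \mu$ and $\mu_H(K) \le 0$, which are the arithmetic constraints I would carry forward; note that $\ch_0(G) = -\ch_0(F)$ and that $p_H(F), p_H(G)$ both lie on the wall line, on opposite sides of $\beta = 0$.

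The engine of the estimate is the strong BG inequality on $\bP^3$. By Theorem \ref{thm:bgP3} and Corollary \ref{cor:bgP3}, applied to $F$ and (through its cohomology sheaves) to $G$, both $p_H(F)$ and $p_H(G)$ lie weakly below the piecewise-linear graph of $\Theta$; since both points sit on the wall line of slope $m$, this reads as $\alpha_0 \le \Theta(\mu_H(F)) - m\,\mu_H(F)$ and $\alpha_0 \le \Theta(\mu_H(G)) - m\,\mu_H(G)$. Feeding in the constraints from the previous paragraph — in particular that the torsion parts come from sub- and quotient sheaves of $E$, which (via $\Hom_{\bP^3}(\mcO_{\bP^3}(k), \iota_*E) = H^0(S^2_d, E(-kH)) = 0$ for $k \ge 1$ and its dual) prevents the destabilizing factors from acquiring large slope — confines $(\mu_H(F), \mu_H(G))$ to the range where $\mu_H(F) - \mu_H(G)$ is governed by the $dH$-twist. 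Optimizing the two bounds over this range, the maximum is attained at the configuration with $\mu_H(F) - \mu_H(G) = d$ and both points on $\Theta$, and a direct evaluation returns $\alpha_0 \le -\mu^2 + \frac{2d-1}{4}\mu = \alpha_\mu$, contradicting $\alpha_0 > \alpha_\mu$.

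The step I expect to be the crux is this last one. The purely numerical width bound of Lemma \ref{lem:firstwall} only yields $\alpha_0 \le \mu(d-\mu)/2$, which is strictly larger than $\alpha_\mu$; closing the gap genuinely requires the strong BG curve $\Theta$ together with the sheaf-theoretic control of $F$ and $G$ to rule out high-slope destabilizers that are numerically permitted by $\Theta$ alone (for instance a factor of slope $1$ like $\mcO_{\bP^3}(H)^{\oplus}$, excluded because $E$ has no sections after a negative twist). Making this exclusion work uniformly in $\ch_0(F)$ and across the several linear pieces of $\Theta$ is where the real labour sits, and the emergence of the clean configuration $\mu_H(F) - \mu_H(G) = d$ is precisely what links the bound to the Feyzbakhsh restriction picture of Lemma \ref{lem:feyrestr}.
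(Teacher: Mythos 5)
Your architecture is the right one in spirit (walls of fixed slope $m=\mu-d/2$, the strong curve $\Theta$ as the controlling bound, a width-type constraint, then an optimization whose extremal configuration is the Feyzbakhsh pair), and your observation that pairing Lemma \ref{lem:firstwall} with the classical parabola only yields $\mu(d-\mu)/2$ is correct. But the step you yourself identify as the crux contains a genuine gap: the constraint $\mu_H(F)-\mu_H(G)\le d$ that your optimization needs is false in general. Writing $\rho=\ch_0(F)$, $r=\ch_0(E)$, $r''=\ch_0(E'')$ and using your own structure sequences $0\to K\to F\to\iota_*E''\to 0$ and $\ch(G)=\ch(\iota_*E)-\ch(F)$ together with Lemma \ref{lem:chern}, one computes
\[
\mu_H(F)-\mu_H(G)
=\frac{dr''}{\rho}+\frac{d\left(r-r''\right)}{\rho}
=\frac{d\,r}{\rho},
\]
which exceeds $d$ whenever $\rho<r$. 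Such low-rank destabilizing subobjects do exist in $\Coh^0(\bP^3)$ (for instance extensions $0\to\mcO_{\bP^3}(-aH)\to F\to\iota_*E''\to 0$ with $E''\subset E$ of rank one give genuine injections $F\hookrightarrow\iota_*E$ in the heart with cone in the heart), and they are not touched by the vanishing $\Hom\left(\mcO_{\bP^3}(k),\iota_*E\right)=0$, which only controls maps from line bundles. For these configurations the two bounds $\alpha_0\le\Theta(x)-mx$ at $x=\mu_H(F)$ and $x=\mu_H(G)$ do not force $\alpha_0\le\alpha_\mu$: the two points are $dr/\rho$ apart and $\Theta$ grows quadratically, so the minimum of the two bounds can far exceed $\alpha_\mu$ (e.g. $d=4$, $\mu=1/2$, $r=2$, $\rho=1$ numerically allows $\alpha_0$ up to $2$ against $\alpha_\mu=5/8$). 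Ruling such walls out requires finer sheaf theory (semistability of $E$ forcing $\mu_H(E'')\le\mu$, BG-type bounds on the kernel $K$, the inequality $r''\le\rho$ coming from the surjection $F|_{S^2_d}\twoheadrightarrow E''$, and an induction over $\rho$), none of which is in your sketch; so the optimization, as stated, rests on a feasible set that is strictly smaller than the true one.

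The paper closes exactly this gap by applying the two ingredients to different points than you do: not to $p_H(F)$ and $p_H(G)$, but to the two \emph{endpoints} $(\beta_1,\alpha_1)$, $(\beta_2,\alpha_2)$ of the wall segment itself. By Corollary \ref{cor:bgP3} the space of tilt-stability conditions extends to $\left\{\alpha>\Theta(\beta)\right\}$, so these endpoints lie on the graph of $\Theta$; and Lemma \ref{lem:firstwall} bounds precisely their horizontal distance, $\beta_2-\beta_1\le d$, uniformly in $\rho$ (its proof survives $\rho<r$ essentially because it runs through $r''\le\rho$ rather than $\rho\ge r$). Then the relation $\Theta(t+1)=\Theta(t)+t+1/2$ shows that a chord of $\Theta$ of width $d$ ending at $\beta_2$ has slope $\beta_2-d/2$, so a wall of slope $\mu-d/2$ and width $\le d$ must have $\beta_2\le\mu$, hence lies below the line $(\mu-d/2)(x-\mu)+\Theta(\mu)$, whose value at $x=0$ is $\alpha_\mu$. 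In short: keep Lemma \ref{lem:firstwall} — your dismissal of it stems from pairing it with the parabola rather than with the $\Theta$-boundary — and transfer the $\Theta$-inequality from the destabilizing objects to the wall endpoints; that substitution is what your argument is missing.
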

\begin{proof}
Let $W$ be a wall for $\iota_*E$ 
with respect to $\nu_{\beta, \alpha}$-stability. 
Note that the wall $W$ is the line segment 
with slope $\mu-d/2$. 
Note also that by Corollary \ref{cor:bgP3}, 
the end points of the wall $W$ 
are on the graph of $\Theta$. 
Let $\beta_1 < 0 < \beta_2$ be their $\beta$-coordinates. 

By Lemma \ref{lem:firstwall}, 
we have $\beta_2-\beta_1 \leq d$ 
and hence the slope of the line 
passing through the points 
$\left(\beta_2, \Theta(\beta_2) \right)$, 
$\left(\beta_2-d, \Theta(\beta_2-d) \right)$ 
should be smaller than or equal to that of $W$, 
i.e., $\beta_2-d/2 \leq \mu-d/2$. 
Hence every wall for $\iota_*E$ 
should be below the line 
\[
y_\mu=(\mu-d/2)(x-\mu)+\Theta(\mu). 
\]
By computing $\alpha_\mu:=y_\mu(0)$, we get the result. 
\end{proof}

\begin{prop} \label{prop:bg2dim}
Let $E \in \Coh(S^2_d)$ be 
a slope semistable vector bundle on $S^2_d$ 
with slope $\mu:=\mu_H(E) \in [0, 1/2]$. 
Then the inequality (\ref{eq:xi}) holds. 
\end{prop}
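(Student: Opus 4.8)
The plan is to push $E$ forward to $\bP^3$ and feed the result into the generalized BG inequality of Theorem \ref{thm:bg3P3}. First I would set $F:=\iota_*E\in\Coh^0(\bP^3)$ and record its Chern character from Lemma \ref{lem:chern}: writing $r:=\ch_0(E)$, $a:=H\ch_1(E)/d=\mu r$, and $b:=\ch_2(E)$, and normalizing $H^3=1$ on $\bP^3$, one has
\[
H^2\ch_1(F)=dr,\qquad H\ch_2(F)=d\left(a-\frac{d}{2}r\right),\qquad \ch_3(F)=b-\frac{d^2}{2}a+\frac{d^3}{6}r.
\]
Since $\ch_0(F)=0$, this also gives $\overline{\Delta}_H(F)=\left(H^2\ch_1(F)\right)^2=d^2r^2$.

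Next, by Lemma \ref{lem:wall} the object $F$ is $\nu_{0,\alpha}$-semistable for every $\alpha\geq\alpha_\mu$, so (provided $\alpha_\mu>0$) I may apply Theorem \ref{thm:bg3P3} at the extremal parameter $(\beta,\alpha)=(0,\alpha_\mu)$, where $\ch^0=\ch$. Substituting the quantities above into
\[
2\alpha_\mu\,\overline{\Delta}_H(F)+4\left(H\ch_2(F)\right)^2-6\,H^2\ch_1(F)\,\ch_3(F)\geq 0
\]
produces a linear inequality in $b$ whose coefficient $-6dr$ is negative; solving for $b$ therefore yields an upper bound on $\ch_2(E)/(H^2\ch_0(E))=b/(dr)$.

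The final step is the algebraic collapse. After dividing by $6dr$ and inserting $a=\mu r$ together with $\alpha_\mu=-\mu^2+\frac{2d-1}{4}\mu$, I expect the contributions $\frac{\alpha_\mu}{3}$, $\frac{2}{3}\left(\mu-\frac{d}{2}\right)^2$ and $\frac{d}{2}\mu-\frac{d^2}{6}$ to recombine so that every term carrying a factor of $d$ cancels, leaving exactly
\[
\frac{b}{dr}\leq \frac{1}{3}\mu^2-\frac{1}{12}\mu=\Xi(\mu)
\]
on $[0,1/2]$. This cancellation of the $d$-dependence is really the content of the proposition, and it is the reason the numerical constants appearing in $\Xi$ and in the definition of $\alpha_\mu$ are tuned the way they are.

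The main difficulty is conceptual rather than computational: the bound extracted from $\mathrm{BG}(0,\alpha)$ weakens monotonically as $\alpha$ increases, so the argument only hits $\Xi(\mu)$ if one supplies the \emph{sharpest} lower threshold $\alpha_\mu$ below which $F$ could cease to be tilt-semistable. Securing that threshold is precisely the role played by the first-wall estimate (Lemma \ref{lem:firstwall}) together with the strong $\bP^3$ inequality (Corollary \ref{cor:bgP3}) inside Lemma \ref{lem:wall}; the exact matching with $\Xi$ is what confirms that this threshold is optimal. I would also need to check that $(0,\alpha_\mu)$ is an admissible stability parameter, i.e. $\alpha_\mu>0$, which holds for $d\geq 2$ and $\mu\in(0,1/2]$, while the degenerate situations ($\mu=0$, and the plane $d=1$ with $\mu$ close to $1/2$, where $\alpha_\mu$ may fail to be positive) should be dispatched by a limiting argument or by invoking the sharper classical inequality on $\bP^2$.
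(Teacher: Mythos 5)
Your proposal follows exactly the paper's own proof: push $E$ forward to $\bP^3$, invoke Lemma \ref{lem:wall} for $\nu_{0,\alpha_\mu}$-semistability, apply Theorem \ref{thm:bg3P3} at $(\beta,\alpha)=(0,\alpha_\mu)$, and check that the degree $d$ cancels from the resulting bound --- indeed your claimed recombination of $\frac{\alpha_\mu}{3}$, $\frac{2}{3}\left(\mu-\frac{d}{2}\right)^2$ and $\frac{d}{2}\mu-\frac{d^2}{6}$ into $\frac{1}{3}\mu^2-\frac{1}{12}\mu=\Xi(\mu)$ is precisely the paper's displayed inequality after dividing by $d^2r^2$. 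Your extra attention to the admissibility condition $\alpha_\mu>0$ (which degenerates at $\mu=0$ and fails for $d=1$, $\mu\in[1/4,1/2]$) flags an edge case the paper passes over silently, and your proposed remedies --- a limiting argument $\alpha\to 0^+$ for $\mu=0$, and the classical stability bound (e.g.\ $\chi(E,E)\leq 1$ via Riemann--Roch) on $\bP^2$ for $d=1$ --- do close it.
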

\begin{proof}
By Lemma \ref{lem:wall}, 
the sheaf $\iota_*E \in \Coh^0(\bP^3)$ 
is $\nu_{0, \alpha_\mu}$-semistable. 
Hence by the generalized BG type inequality Theorem \ref{thm:bg3P3}, 
we get 
\begin{equation} \label{eq:ch3}
2\alpha_\mu\overline{\Delta}_H(\iota_*E)
+4\left(H\ch_2(\iota_*E) \right)^2
-6H^2\ch_1(\iota_*E)\ch_3(\iota_*E) \geq 0. 
\end{equation}
Let us put 
$r:=\ch_0(E), 
a:=H\ch_1(E)/d, 
b:=\ch_2(E)$.
Note that we have $H_S^2=d$ and $\mu=a/r$. 
Using the equations (\ref{eq:ch*}) and (\ref{eq:alpha}), 
and dividing the inequality (\ref{eq:ch3}) by $d^2r^2$, 
we get the inequality 
\[
-2\mu^2+\left(d-\frac{1}{2} \right)\mu+4\left(\mu-\frac{d}{2} \right)^2
-6\left(\frac{b}{dr}-\frac{d}{2}\mu+\frac{d^2}{6} \right) \geq 0,
\]
which is equivalent to the desired inequality (\ref{eq:xi}). 
\end{proof}

We are now able to finish the proof of Theorem \ref{thm:bggeneral}: 
\begin{proof}[Proof of Theorem \ref{thm:bggeneral}]
The assertion follows from Lemma \ref{lem:cut} and Proposition \ref{prop:bg2dim}. 
\end{proof}

\section{Positive characteristics} \label{sec:positive}
In this section, we work over 
an algebraically closed field $k$ of positive characteristic. 
In positive characteristic, 
the main difference with the case of characteristic zero is 
that the classical BG inequality does not hold in general. 
The failure of the BG inequality is related to the failure of Kodaira vanishing 
in positive characteristic (cf. \cite{muk13, ray78}). 

Before discussing about 
Theorems \ref{thm:P3intro} and \ref{thm:hyperintro}, 
let us recall the result of Langer \cite{lan04}. 
Let $X$ be a smooth projective variety of dimension $n \geq 2$, 
and $H$ an ample divisor on $X$. 
We say that a coherent sheaf $E \in \Coh(X)$ is 
{\it strongly $\mu_H$-semistable} if for every $e >0$, 
the sheaf $F^{e*}E$ is $\mu_H$-semistable, 
where $F \colon X \to X$ is the absolute Frobenius morphism. 
We have the following result: 

\begin{thm}[{\cite[Corollary 2.4, Theorem 3.2]{lan04}}]
Assume that we have $\mu_{H, \max}(\Omega_X) \leq 0$. 
Then every $\mu_H$-semistable torsion free sheaf $E$ is 
strongly $\mu_H$-semistable. 
Moreover, the usual BG inequality holds: 
\[
H^{n-2}\Delta(E) \geq 0. 
\]
\end{thm}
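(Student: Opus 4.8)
The plan is to prove the two assertions in turn, using the absolute Frobenius $F\colon X\to X$ as the main tool, with the hypothesis $\mu_{H,\max}(\Omega_X)\le 0$ (equivalently $\mu_{H,\min}(\Omega_X^\vee)\ge 0$) entering in both parts. For the first assertion, suppose $E$ is $\mu_H$-semistable but $F^*E$ is not, and let $V_1\subset F^*E$ be its maximal destabilizing subsheaf, so $V_1$ is $\mu_H$-semistable with $\mu_H(V_1)=\mu_{H,\max}(F^*E)$. The sheaf $F^*E$ carries the canonical (Cartier) connection $\nabla\colon F^*E\to F^*E\otimes\Omega_X$, whose $p$-curvature vanishes. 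If $V_1$ were $\nabla$-horizontal, Cartier descent would produce a subsheaf $E_1\subset E$ with $F^*E_1=V_1$ and $\mu_H(E_1)=\mu_H(V_1)/p>\mu_H(E)$, contradicting the semistability of $E$; here I use that the absolute Frobenius multiplies $\ch_1$ by $p$. Hence $V_1$ is not horizontal, so the induced $\mcO_X$-linear map $V_1\to(F^*E/V_1)\otimes\Omega_X$ is nonzero. Since $V_1$ is semistable this gives $\mu_{H,\max}(F^*E)=\mu_{H,\min}(V_1)\le\mu_{H,\max}\bigl((F^*E/V_1)\otimes\Omega_X\bigr)$, which, combined with $\mu_{H,\max}(\Omega_X)\le 0$ and the fact that $\mu_{H,\max}(F^*E/V_1)$ is the strictly smaller second Harder--Narasimhan slope of $F^*E$, must be pushed to a contradiction. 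Applying the same reasoning to $F^*(F^*E)=F^{2*}E$, and iterating, then shows that $F^{e*}E$ is $\mu_H$-semistable for every $e$, i.e. $E$ is strongly $\mu_H$-semistable.

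For the Bogomolov--Gieseker inequality I would first reduce to a surface. By Langer's restriction theorem the restriction $E|_S$ to a general complete intersection surface $S=D_1\cap\cdots\cap D_{n-2}$ with $D_i\in|mH|$, $m\gg 0$, is again $\mu_H$-semistable; since $\mu_{H,\max}(\Omega_S)\le\mu_{H,\max}(\Omega_X|_S)\le 0$, the first part upgrades this to strong semistability of $E|_S$ on $S$. As $\Delta(E|_S)=m^{n-2}H^{n-2}\Delta(E)$ with $m^{n-2}>0$, it suffices to prove $\Delta(E|_S)\ge 0$. On the surface I would run Bogomolov's argument through Frobenius: because $\ch_i(F^{e*}(E|_S))=p^{ie}\ch_i(E|_S)$ one has $\Delta(F^{e*}(E|_S))=p^{2e}\Delta(E|_S)$, while Riemann--Roch gives $\chi(\mcE nd(F^{e*}(E|_S)))=r^2\chi(\mcO_S)-p^{2e}\Delta(E|_S)$, where $r=\ch_0(E)$. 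Since $E|_S$ is strongly semistable, each $\mcE nd(F^{e*}(E|_S))=F^{e*}(E|_S)\otimes F^{e*}(E|_S)^\vee$ is $\mu_H$-semistable of slope $0$ and rank $r^2$ independent of $e$; the uniform boundedness of $h^0$ for semistable sheaves of fixed rank and slope, together with Serre duality and the self-duality $\mcE nd^\vee\cong\mcE nd$, bounds both $h^0$ and $h^2$ uniformly in $e$. Thus $\chi(\mcE nd(F^{e*}(E|_S)))$ is bounded above, forcing $p^{2e}\Delta(E|_S)$ to be bounded below, and letting $e\to\infty$ yields $\Delta(E|_S)\ge 0$.

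The main obstacle throughout is the failure of the characteristic-zero tensor formalism: in positive characteristic tensor products of semistable sheaves need not be semistable, so neither the bound on $\mu_{H,\max}\bigl((F^*E/V_1)\otimes\Omega_X\bigr)$ in the first part nor the semistability of $\mcE nd(F^{e*}(E|_S))$ in the second is automatic. This is exactly what forces the use of strong semistability and of the strong slopes $L_{\max}$, $L_{\min}$, defined as the limits of $\mu_{\max}(F^{e*}-)/p^{e}$: the hypothesis $\mu_{H,\max}(\Omega_X)\le 0$ has to be leveraged, after Frobenius pullback and renormalisation, to control the relevant tensor slopes, and it is the theorem that tensor products of strongly semistable sheaves remain semistable (Ramanan--Ramanathan, Langer) that repairs the second part. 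Making these slope estimates effective and uniform in $e$, and controlling the effectivity of the restriction theorem so that a single general $S$ serves, is the technical heart; the conceptual skeleton, however, is precisely the Cartier-connection dichotomy of the first part and the Frobenius rescaling $\Delta(F^{e*}-)=p^{2e}\Delta(-)$ of the second.
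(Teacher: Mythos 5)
The statement is quoted verbatim from Langer \cite{lan04}; the paper contains no proof of it (it is pure citation), so your proposal has to be judged as a reconstruction of Langer's argument, which is itself a substantial piece of work.

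Your first part has the right classical skeleton (canonical connection with vanishing $p$-curvature, Cartier descent applied to the destabilizing subsheaf, the induced nonzero $\mcO_X$-linear map $V_1\to (F^*E/V_1)\otimes\Omega_X$), but the step you leave as ``must be pushed to a contradiction'' is not a routine verification: it is the actual content of Langer's Section 2, and the fix you name does not close it in the naive way. Even granting Ramanan--Ramanathan so that $L_{\max}(A\otimes B)=L_{\max}(A)+L_{\max}(B)$, the slope estimate you get is $\mu_{\max}(F^*E)\le L_{\max}(F^*E/V_1)+L_{\max}(\Omega_X)$, and the inequality $L_{\max}(F^*E/V_1)\ge \mu_{\max}(F^*E/V_1)$ points the \emph{wrong} way, so no contradiction with the strict drop of Harder--Narasimhan slopes follows. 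Moreover you need $L_{\max}(\Omega_X)\le 0$, whereas the hypothesis only gives $\mu_{\max}(\Omega_X)\le 0$; deducing the former from the latter is itself part of Langer's Theorem 2.1 and its corollaries (bounds on $L_{\max}-\mu_{\max}$ in terms of $L_{\max}(\Omega_X)$, applied in particular to $\Omega_X$ and bootstrapped). So Part 1 is an honest sketch with the hard part missing.

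The second part contains an outright error. You claim $\mu_{H,\max}(\Omega_S)\le \mu_{H,\max}(\Omega_X|_S)\le 0$ for a general high-degree complete intersection surface $S$, but the conormal sequence $0\to N^\vee_{S/X}\to \Omega_X|_S\to \Omega_S\to 0$ exhibits $\Omega_S$ as a \emph{quotient}, not a subsheaf, of $\Omega_X|_S$, and $\mu_{\max}$ is monotone only under subsheaves: the Euler sequence shows $T_{\bP^n}$ is a quotient of $\mcO(1)^{\oplus(n+1)}$ with $\mu(T_{\bP^n})=(n+1)/n>1$, so monotonicity fails for quotients. Worse, the conclusion you want is genuinely false: for $X=\bP^3$ and $m\ge 5$, a general $S\in|mH|$ has $K_S$ ample, hence $\mu_H(\Omega_S)>0$, so Part 1 can never be applied on $S$, and the strong semistability of $E|_S$ --- which your Riemann--Roch argument truly needs, since it must bound $h^0$ and $h^2$ of $\End(F^{e*}(E|_S))$ uniformly in $e$ --- is unjustified. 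The repaired order (strong semistability on $X$ first, then restrict, using $F_S^{e*}(E|_S)\cong (F_X^{e*}E)|_S$) requires a \emph{single} surface $S$ on which $(F^{e*}E)|_S$ is semistable for every $e$ simultaneously; Mehta--Ramanathan gives bounds depending on the sheaf, hence on $e$, and Langer's effective restriction theorem cannot be invoked both because it is proved \emph{using} the Bogomolov--Gieseker inequality (circularity) and because its bound grows with $\Delta(F^{e*}E)=p^{2e}\Delta(E)$. Evading exactly this uniformity problem is why Langer proves the result through an intertwined induction (a BG inequality with correction term measured by $L_{\max}-L_{\min}$, established together with restriction theorems, the correction vanishing for strongly semistable sheaves), rather than by the clean two-step reduction you propose.
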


In particular, the usual BG inequality holds 
on the projective space $\bP^3$. 
Hence all the results in Section \ref{sec:P3} hold true 
also in positive characteristic, without any modification: 
\begin{thm} \label{thm:P3positive}
Theorem \ref{thm:P3intro} holds true in arbitrary characteristic. 
\end{thm}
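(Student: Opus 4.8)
The plan is to verify that the proof of Theorem~\ref{thm:P3intro} given in Section~\ref{sec:P3} (that is, Theorem~\ref{thm:bgP3} together with Corollary~\ref{cor:bgP3}) carries over verbatim to positive characteristic, the only characteristic-sensitive ingredient being the classical Bogomolov--Gieseker inequality. First I would record that for $X=\bP^3$ the cotangent bundle $\Omega_X$ is slope stable with $\mu_{H,\max}(\Omega_X)=-4/3<0$ (equivalently, $T_{\bP^3}$ is slope stable of positive slope). Hence the hypothesis of the theorem of Langer recalled above is satisfied, and the classical BG inequality $H\Delta(E)\geq 0$ holds for every $\mu_H$-semistable torsion free sheaf $E$ on $\bP^3$ in arbitrary characteristic.

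From this the tilt-stability inputs used in Section~\ref{sec:P3} become available. The construction of the tilted heart $\Coh^0(\bP^3)$, the existence of Harder--Narasimhan filtrations, the linear structure of walls, and the monotonicity of the discriminant $\overline{\Delta}_H$ across walls (\cite[Corollary~3.10]{bms16}) are all purely homological and are insensitive to the characteristic. The tilt BG inequality (Theorem~\ref{thm:bgtilt}) on $\bP^3$ follows formally from the classical BG inequality just established, exactly as in \cite{bms16}; likewise I would check that Feyzbakhsh's restriction lemma (Lemma~\ref{lem:feyrestr}), and therefore Lemma~\ref{lem:restriction}, use only tilt-semistability on the ambient threefold together with the inequality $\overline{\Delta}_H\geq 0$, so that they remain valid in positive characteristic. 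Crucially, no form of the conjectural $\ch_3$-inequality (Theorem~\ref{thm:bg3P3}) is needed anywhere in Section~\ref{sec:P3}, so there is nothing further to verify on this delicate point.

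It remains to confirm the two surface estimates. Both Lemma~\ref{lem:bgS2} (on the smooth quadric $S_2\cong\bP^1\times\bP^1$) and Lemma~\ref{lem:bgS4} (on the smooth quartic K3 surface $S_4$) are proved directly from slope stability, Serre duality, the Riemann--Roch theorem, and, in the quartic case, the smoothness and symplectic, hence even-dimensional, structure of the moduli space of stable sheaves on a K3 surface; all of these hold over any algebraically closed field, so the proofs apply without change. Assembling these pieces, the argument of Theorem~\ref{thm:bgP3}---restricting a tilt-semistable object of minimal discriminant to general hypersurfaces of degrees $2$ and $4$ and invoking Lemmas~\ref{lem:bgS2} and~\ref{lem:bgS4}---goes through word for word, and Corollary~\ref{cor:bgP3} then extends the bound to all slope-semistable torsion free sheaves via the periodicity of $\Theta$. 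The main obstacle is the point isolated above: ensuring that the tilt BG inequality and the restriction lemma do not covertly depend on characteristic zero beyond the classical BG inequality (for instance through a generic smoothness step hidden in the restriction argument). Since the classical inequality is precisely what Langer's theorem supplies on $\bP^3$, this is exactly where the positivity $\mu_{H,\max}(\Omega_{\bP^3})<0$ does all the work.
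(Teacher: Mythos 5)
Your proposal is correct and takes the same route as the paper: the paper's entire proof of Theorem \ref{thm:P3positive} is the observation that $\mu_{H, \max}(\Omega_{\bP^3})<0$, so Langer's theorem supplies the classical BG inequality on $\bP^3$ in arbitrary characteristic, after which every step of Section \ref{sec:P3} (the tilt-stability formalism, Lemma \ref{lem:restriction}, and the surface Lemmas \ref{lem:bgS2} and \ref{lem:bgS4}) goes through without modification; your remark that Theorem \ref{thm:bg3P3}, the only genuinely characteristic-zero input of the paper, is never used in Section \ref{sec:P3} is exactly the point being made.

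One correction, though, precisely because the whole issue here is characteristic-sensitivity: do not justify Lemma \ref{lem:bgS4} by the smoothness and symplectic (hence even-dimensional) structure of moduli of stable sheaves on a K3 surface. That statement is itself delicate in positive characteristic (the alternating property of the pairing on $\Ext^1(E,E)$ is not automatic in characteristic $2$, and the trace-splitting arguments underlying unobstructedness are problematic when $p$ divides the rank), so quoting it ``over any algebraically closed field'' smuggles in an unproved characteristic-free claim of the same nature as the one you are trying to establish. It is also unnecessary: in the situation of Lemma \ref{lem:bgS4} one has $\ch_0(E)=2a$ and $\ch_1(E)=\ch_0(E)H/2$ with $H^2=4$, so Riemann--Roch gives
\[
\chi(E,E)=\ch_0(E)^2+2\ch_0(E)\ch_2(E)=4a^2+4a\ch_2(E),
\]
an even integer, while stability (hence simplicity) and Serre duality, both valid over any algebraically closed field since $\omega_{S_4}\cong\mcO_{S_4}$, give $\chi(E,E)=2-\ext^1(E,E)\leq 1$ once $E$ is non-spherical; evenness then forces $\chi(E,E)\leq 0$, which is all the paper uses. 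With this substitution in place of the moduli-theoretic appeal, your argument is complete and coincides with the paper's.
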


For the arguments in Section \ref{sec:hyper}, 
the only issue in positive characteristic is Lemma \ref{lem:cut}, 
for which we used Lemma \ref{lem:restriction}. 
To apply Lemma \ref{lem:restriction} 
to hypersurfaces $S^n_d \subset \bP^{n+1}_k$, 
we need the usual BG inequality on $S^n_d$, 
which is not known in positive characteristic. 
The other arguments in Section \ref{sec:hyper} also work 
in positive characteristic, and hence: 

\begin{thm} \label{thm:hyperpositive}
When $n=2$, Theorem \ref{thm:hyperintro} holds 
in arbitrary characteristic. 
\end{thm}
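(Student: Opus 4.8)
The strategy is to notice that for $n=2$ the proof of Theorem \ref{thm:bggeneral} avoids the only ingredient that fails in positive characteristic. In the proof of Lemma \ref{lem:cut}, the restriction lemma (Lemma \ref{lem:restriction}) — and hence, through it, the usual Bogomolov--Gieseker inequality on $S^n_d$, which is unavailable in positive characteristic — is used \emph{solely} to reduce the dimension from $n \geq 3$ down to $n=2$. For $n=2$ there is nothing to cut down. The conceptual point is that the surface argument pushes everything forward to $\bP^3$ and performs all wall-crossing there, so it only ever invokes Bogomolov--Gieseker type inequalities on $\bP^3$, where they hold in every characteristic. I would therefore bypass Lemma \ref{lem:restriction} entirely and reprove the surface statement directly, checking that each remaining ingredient survives in positive characteristic.

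First I would carry out the elementary reductions, all of which are characteristic-free. Passing to the double dual $E^{\vee\vee}$, which is locally free on the smooth surface $S^2_d$ and has the same rank and slope while only increasing $\ch_2$, reduces the statement to locally free $E$; taking Jordan--H\"older factors, which all share the slope $\mu_H(E)$, reduces by additivity of the Chern character to slope stable $E$; and the symmetry $E \mapsto E^\vee(H)$, which sends $\mu_H(E)$ to $1-\mu_H(E)$ in a manner compatible with $\Xi$ exactly as recorded in the proof of Lemma \ref{lem:cut}, reduces the range to $\mu_H(E) \in [0,1/2]$. Since the target Theorem \ref{thm:hyperintro} concerns only slope semistable sheaves (not $\nu_{0,\alpha}$-semistable objects), these reductions already produce the desired statement and I would not need Proposition \ref{prop:tiltbg}. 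Thus the theorem for $n=2$ reduces to Proposition \ref{prop:bg2dim} in arbitrary characteristic.

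Next I would rerun Proposition \ref{prop:bg2dim} over a field of positive characteristic, following the same steps verbatim. The computation of $\ch(\iota_*E)$ for $\iota \colon S^2_d \hookrightarrow \bP^3$ in Lemma \ref{lem:chern} rests only on Grothendieck--Riemann--Roch for the closed embedding and on the Todd-class formulas (\ref{eq:tdhyper}), both valid over any field. The core, Lemma \ref{lem:wall}, bounds the first wall of $\iota_*E \in \Coh^0(\bP^3)$ using the strong inequality $\Theta$ on $\bP^3$ (Corollary \ref{cor:bgP3}) together with the width estimate of Lemma \ref{lem:firstwall}; here Corollary \ref{cor:bgP3} is available in positive characteristic by Theorem \ref{thm:P3positive}, and Lemma \ref{lem:firstwall} is a formal consequence of the wall structure in tilt-stability on $\bP^3$ that does not use characteristic $0$. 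This yields $\nu_{0,\alpha_\mu}$-semistability of $\iota_*E$, and feeding it into the generalized inequality of Theorem \ref{thm:bg3P3}, then substituting (\ref{eq:ch*}) and (\ref{eq:alpha}) and dividing by $d^2r^2$ as in the char-$0$ proof, gives precisely (\ref{eq:xi}).

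The step requiring the most care — and the one I would scrutinize first — is the appeal to the generalized ($\ch_3$) inequality of Theorem \ref{thm:bg3P3} on $\bP^3$, together with the tilt-stability inequality of Theorem \ref{thm:bgtilt} that underlies the discriminant minimization in Corollary \ref{cor:bgP3} and the wall-bounding of Lemma \ref{lem:firstwall}. All of these are applied on $\bP^3$ rather than on $S^2_d$, which is why the surface case does not inherit the obstruction that blocks $n \geq 3$. I expect Theorem \ref{thm:bgtilt} to hold on $\bP^3$ in every characteristic because its essential input is the classical Bogomolov--Gieseker inequality, which (as recalled at the start of this section) holds on $\bP^3$ in every characteristic. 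The genuine obstacle is to confirm that the proof of the $\ch_3$-inequality in Theorem \ref{thm:bg3P3} on $\bP^3$ does not secretly rely on Kodaira-type vanishing or other characteristic-$0$ phenomena; this is the one place where the positive-characteristic case must be justified carefully rather than merely quoted, and once it is in hand the rest of the argument goes through without modification.
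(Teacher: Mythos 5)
Your proposal is correct and follows essentially the same route as the paper: the paper's proof likewise observes that the only characteristic-zero ingredient in Section \ref{sec:hyper} is Lemma \ref{lem:restriction} (used through Lemma \ref{lem:cut} solely to reduce from $n \geq 3$ to $n=2$, where it requires the unavailable BG inequality on $S^n_d$), and that the remaining arguments --- Lemmas \ref{lem:chern}, \ref{lem:wall}, \ref{lem:firstwall}, Proposition \ref{prop:bg2dim}, Corollary \ref{cor:bgP3} (valid in positive characteristic by Theorem \ref{thm:P3positive}), and Theorem \ref{thm:bg3P3}, all of which concern tilt-stability on $\bP^3$ rather than on $S^2_d$ --- go through unchanged. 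Your extra observations (that Proposition \ref{prop:tiltbg} can be bypassed because the statement concerns only slope semistable sheaves, and that Theorems \ref{thm:bgtilt} and \ref{thm:bg3P3} on $\bP^3$ rest only on the classical BG inequality, which holds there in every characteristic by Langer's result) are refinements of, not departures from, the paper's one-paragraph justification.
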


\begin{cor} \label{cor:hyperpositive}
Let $k$ be an algebraically closed field of positive characteristic. 
Let $S^2_d \subset \bP^3$ be a smooth hypersurface of degree $d \geq 1$, 
denote by $H$ the restriction of the hyperplane class on $\bP^3$. 
Then for every torsion free $\mu_H$-semistable sheaf $E \in \Coh(S^2_d)$, 
we have 
\[
\overline{\Delta}_H(E) \geq 0. 
\]
\end{cor}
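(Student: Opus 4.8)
The plan is to derive this classical BG inequality as a formal consequence of the stronger inequality already established in arbitrary characteristic for surfaces, namely Theorem \ref{thm:hyperpositive} (i.e.\ Theorem \ref{thm:bggeneral} for $n=2$ in positive characteristic). The starting observation is that, on a surface and for $\ch_0(E)>0$, the inequality $\overline{\Delta}_H(E)\geq 0$ is equivalent to $\ch_2(E)/(H^2\ch_0(E))\leq \tfrac{1}{2}\mu_H(E)^2$, since $\overline{\Delta}_H(E)=(H\ch_1(E))^2-2H^2\ch_0(E)\ch_2(E)$. Thus it suffices to check that the function $\Xi$ lies below the parabola $t\mapsto t^2/2$ on $[0,1]$ and to reduce an arbitrary torsion free semistable sheaf to one whose slope lies in $[0,1]$.

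First I would verify the elementary inequality $\Xi(t)\leq \tfrac{1}{2}t^2$ for all $t\in[0,1]$. On $[0,1/2]$ this reduces to $\tfrac{1}{6}t^2+\tfrac{1}{12}t\geq 0$, which is clear, and on $[1/2,1]$ it reduces, after clearing denominators, to $(2t-3)(t-1)\geq 0$, which holds since both factors are nonpositive there. Hence, by Theorem \ref{thm:hyperpositive}, every torsion free $\mu_H$-semistable sheaf $E$ on $S^2_d$ with $\mu_H(E)\in[0,1]$ satisfies $\ch_2(E)/(H^2\ch_0(E))\leq \Xi(\mu_H(E))\leq \tfrac{1}{2}\mu_H(E)^2$, which is precisely $\overline{\Delta}_H(E)\geq 0$ (note $\ch_0(E)>0$ since $E$ is torsion free and nonzero).

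To remove the restriction $\mu_H(E)\in[0,1]$, I would twist by a suitable multiple of $H$. Since $E$ is torsion free and nonzero, $\mu_H(E)$ is a well-defined real number, and there is an integer $m\in\bZ$ with $\mu_H(E)+m\in[0,1)$. The sheaf $E':=E\otimes\mcO_{S^2_d}(mH)$ is again torsion free and $\mu_H$-semistable, with $\mu_H(E')=\mu_H(E)+m\in[0,1]$. A short Chern-character computation shows that twisting by a multiple of $H$ leaves $\overline{\Delta}_H$ unchanged: the two correction terms are of the shape $(H\cdot mH)(H\ch_1(E))-H^2(\ch_1(E)\cdot mH)$ and $(H\cdot mH)^2-H^2(mH)^2$, both of which vanish on a surface. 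Applying the previous paragraph to $E'$ then gives $\overline{\Delta}_H(E)=\overline{\Delta}_H(E')\geq 0$, as desired.

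I do not expect a genuine obstacle here: the content is entirely carried by the already-proved Theorem \ref{thm:hyperpositive}, and the corollary is purely formal once one checks that $\Xi$ stays below $t^2/2$ and that $\overline{\Delta}_H$ is invariant under twisting by $\mcO_{S^2_d}(H)$. The only point deserving care is the reduction to slope in $[0,1]$, where it is essential to twist by multiples of the polarization $H$ itself (rather than by arbitrary line bundles), since these are exactly the twists under which $\overline{\Delta}_H$ is preserved.
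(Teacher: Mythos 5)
Your proof is correct and is essentially the paper's intended argument: the paper states Corollary \ref{cor:hyperpositive} without proof, as an immediate consequence of Theorem \ref{thm:hyperpositive}, and your verification that $\Xi(t)\leq \tfrac{1}{2}t^2$ on $[0,1]$ together with the reduction to slope in $[0,1]$ via twisting by $\mcO_{S^2_d}(mH)$ (under which $\overline{\Delta}_H$ is invariant) supplies exactly the details that are left implicit.
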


\begin{rmk}
In a subsequent paper \cite{kos20b}, 
we will prove the usual BG inequality 
(with respect to $\overline{\Delta}_H$) 
on $S^n_d$ for $n \geq 3$. 
The key input in \cite{kos20b} is Corollary \ref{cor:hyperpositive} above. 
Hence Theorem \ref{thm:hyperintro} will be true 
for all $S^n_d$ in arbitrary characteristic.  
\end{rmk}


\end{document}